\newtheorem{theorem}{Theorem}[section]
\newtheorem{lemma}{Lemma}[section]
\theoremstyle{definition}
\newtheorem{definition}{Definition}
\numberwithin{equation}{section}
\newcommand{\beq}{\begin{equation}}
\newcommand{\bea}[1]{\begin{array}{#1} }
\newcommand{\eeq}{ \end{equation}}
\newcommand{\ea}{ \end{array}}
\newcommand{\si}{\sigma}
\def \R  {{\mathbb {R}}}
\def \F  {{\mathscr {F}}}
\def\mean#1{\mathchoice%
          {\mathop{\kern 0.2em\vrule width 0.6em height 0.69678ex depth -0.58065ex
                  \kern -0.8em \intop}\nolimits_{\kern -0.4em#1}}%
          {\mathop{\kern 0.1em\vrule width 0.5em height 0.69678ex depth -0.60387ex
                  \kern -0.6em \intop}\nolimits_{#1}}%
          {\mathop{\kern 0.1em\vrule width 0.5em height 0.69678ex
              depth -0.60387ex
                  \kern -0.6em \intop}\nolimits_{#1}}%
          {\mathop{\kern 0.1em\vrule width 0.5em height 0.69678ex depth -0.60387ex
                  \kern -0.6em \intop}\nolimits_{#1}}}
\def\vintslides_#1{\mathchoice%
          {\mathop{\kern 0.1em\vrule width 0.5em height 0.697ex depth -0.581ex
                  \kern -0.6em \intop}\nolimits_{\kern -0.4em#1}}%
          {\mathop{\kern 0.1em\vrule width 0.3em height 0.697ex depth -0.604ex
                  \kern -0.4em \intop}\nolimits_{#1}}%
          {\mathop{\kern 0.1em\vrule width 0.3em height 0.697ex depth -0.604ex
                  \kern -0.4em \intop}\nolimits_{#1}}%
          {\mathop{\kern 0.1em\vrule width 0.3em height 0.697ex depth -0.604ex
                  \kern -0.4em \intop}\nolimits_{#1}}}
\newcommand{\aveint}[2]{\mathchoice%
          {\mathop{\kern 0.2em\vrule width 0.6em height 0.69678ex depth -0.58065ex
                  \kern -0.8em \intop}\nolimits_{\kern -0.45em#1}^{#2}}%
          {\mathop{\kern 0.1em\vrule width 0.5em height 0.69678ex depth -0.60387ex
                  \kern -0.6em \intop}\nolimits_{#1}^{#2}}%
          {\mathop{\kern 0.1em\vrule width 0.5em height 0.69678ex depth -0.60387ex
                  \kern -0.6em \intop}\nolimits_{#1}^{#2}}%
          {\mathop{\kern 0.1em\vrule width 0.5em height 0.69678ex depth -0.60387ex
                  \kern -0.6em \intop}\nolimits_{#1}^{#2}}}
  \newcommand{\e}{\epsilon}
\def\eqn#1$$#2$${\begin{equation}\label#1#2\end{equation}}
\def\charfn_#1{{\raise1.2pt\hbox{$\chi
_{\kern-1pt\lower3pt\hbox{{$\scriptstyle#1$}}}$}}}
\def\qq1{q_*}
\def\q2{q_{**}}
\newdimen\vintbar
\def\vint{-\kern-\vintbar\int}
\def\C{\mathcal C}
\def\F{\mathcal F}
\def\I{\mathcal I}
\def\P{\mathcal P}
\def\0{\boldsymbol 0}
\newtoks\by
\newtoks\paper
\newtoks\book
\newtoks\jour
\newtoks\yr
\newtoks\pages
\newtoks\vol
\newtoks\publ
\def\name[#1, #2]{#1 #2}
\def\ota{{\hbox{\bf ???}}}
\def\cLear{\by=\ota\paper=\ota\book=\ota\jour=\ota\yr=\ota
\pages=\ota\vol=\ota\publ=\ota}
\def\endpaper{\the\by, \textit{\the\paper},
{\the\jour} \textbf{\the\vol} (\the\yr), \the\pages.\cLear}
\def\endbook{\the\by, \textit{\the\book},
\the\publ, \the\yr.\cLear}
\def\endpap{\the\by, \textit{\the\paper}, \the\jour.\cLear}
\def\endproc{\the\by, \textit{\the\paper}, \the\book, \the\publ,
\the\yr, \the\pages.\cLear}
\begin{document}


\title[Reflected BSDE in Time-dependent Domains]{Reflected BSDE of Wiener-Poisson type in Time-dependent Domains}

\address{Kaj Nystr\"{o}m\\Department of Mathematics, Uppsala University\\
S-751 06 Uppsala, Sweden}
\email{kaj.nystrom@math.uu.se}
\address{Marcus Olofsson\\Department of Mathematics, Uppsala University\\
S-751 06 Uppsala, Sweden}
\email{marcus.olofsson@math.uu.se}

\author{K. Nystr{\"o}m,
 M. Olofsson}

\maketitle

    \begin{abstract}
    \noindent
    In this paper we study multi-dimensional reflected backward stochastic differential equations driven by Wiener-Poisson type processes. We prove existence and uniqueness of solutions,  with reflection in the inward spatial normal direction, in the setting of certain time-dependent domains.\\

    \noindent
    2000  {\em Mathematics Subject Classification.}  \\

\noindent    {\it Keywords and phrases: backward stochastic differential equation, reflected backward stochastic differential equation, time-dependent domain, convex domain}
    \end{abstract}

\newpage

\setcounter{equation}{0} \setcounter{theorem}{0}
\section{Introduction}
\noindent
Backward stochastic differential equations, BSDEs for short, is by now an established field of research.  The solution to a classical BSDE,  driven by a Wiener process $W$,
is  a pair of processes $(Y,Z)$ such that
\begin{eqnarray*}
Y_t= \xi + \int_t ^T f(s,Y_s,Z_s) ds - \int_t ^T Z_s dW_s,\ 0\leq t\leq T,
\end{eqnarray*}
where $\xi$ is a random variable that becomes known, with certainty, only at time $T$. In this setting $Y_t\in\mathbb R^d$, $d\geq 1$, and in the following we refer to the case $d=1$ as the one-dimensional case and to the case $d>1$ as the multi-dimensional case. Classical BSDEs have turned out important in  many areas of mathematics including mathematical finance, see \cite{EPQ} and the long list of references therein, stochastic control theory and stochastic game
theory, see, e.g., \cite{CK} and \cite{HL}, as well as in the connection to partial differential equations, see, e.g., \cite{BBP} and \cite{PP}.

In \cite{EKPPQ} a notion of \textit{reflected} BSDE was introduced. A solution to a one-dimensional reflected BSDE is a triple of processes $(Y,Z,\Lambda)$ satisfying
\begin{eqnarray*}
Y_t&=& \xi + \int_t ^T f(s,Y_s,Z_s) ds +\Lambda_T-\Lambda_t- \int_t ^T Z_s dW_s,\ 0\leq t\leq T,\notag\\
Y_t&\geq& S_t,
\end{eqnarray*}
where the barrier $S$ is a given (one-dimensional) stochastic process. $\Lambda$ is a continuous increasing process, with $\Lambda_0=0$, pushing the process $Y$ upwards
in order to keep it above the barrier. This is done with minimal energy in the sense that
\begin{eqnarray*}
\int_0^T (S_t-Y_t)d\Lambda_t=0,
\end{eqnarray*}
and consequently $\Lambda$ increases only when $Y$ is at the boundary of the space-time domain $\{(t,s): s>S_t\}$. This type of reflected BSDE has important applications in the context of American options, optimal stopping and obstacle problem, see \cite{EKPPQ},  as well as in the context of stochastic game problems, see \cite{CK}.

In the multi-dimensional case there are at least two different types of reflected BSDEs studied in the literature. 

\noindent The first type of
 multi-dimensional reflected BSDE was first studied in \cite{GP} where the authors considered reflected BSDEs of the form
\begin{eqnarray}\label{Kn4}
Y_t&=& \xi + \int_t ^T f(s,Y_s,Z_s) ds +\Lambda_T-\Lambda_t- \int_t ^T Z_s dW_s,\ 0\leq t\leq T,\notag\\
Y_t&\in & \Omega, \ 0\leq t\leq T,
\end{eqnarray}
where $\Omega\subset\mathbb R^d$. In this case $\Lambda_0=0$ and
\begin{eqnarray}\label{Kn4+}
&&\Lambda_t=\int_{0}^{t}\gamma _{s}d\left\vert \Lambda \right\vert
_{s},\ \gamma _{s}\in N^{1}\left( Y_{s}\right),\notag\\
&& d\left\vert \Lambda \right\vert \left( \left\{ t\in \left[ 0,T\right]
:Y_{t} \in \Omega\right\} \right)=0,
\end{eqnarray}
where $N^{1}\left( Y_{s}\right)$ is the unit inner normal to $\Omega$ at $Y_{s}$.
In particular, the process $\Lambda_t$ is of bounded total variation $|\Lambda|$ and it increases only when $Y$ is at the boundary of $\Omega$. To be more precise, when $Y$ is at the boundary it is pushed into the domain along $\gamma\in N^1(Y)$. In \cite{GP} existence and uniqueness for this problem
is established and we note that this problem, and its analysis, is inspired by and resemble the corresponding theory for reflected stochastic differential equations, see \cite{T}, \cite{S}, and \cite{LS}. Naturally one can attempt, as in the case of reflected SDEs, to study this problem with oblique reflection instead of reflection in the direction of the inner unit normal. However, to the best of our knowledge the case of oblique reflection is a less developed area of research in the context of BSDEs and we are only aware of the work in \cite{R}, where the author studies an obliquely reflected BSDE in an orthant.

\noindent
The second type of multi-dimensional reflected BSDEs occurs in the study of optimal switching problems and stochastic games, see, e.g., \cite{AF}, \cite{AH}, \cite{DHP}, \cite{HT}, \cite{HZ}, and references therein. In the generic optimal switching problem a production facility is considered and it is assumed that the production can run in $d \geq 2$ different production modes. Furthermore, there is a stochastic process $X=(X_t)_{t\geq 0}$ which stands for the market price of the underlying commodities and other financial parameters that influence the production. When the facility is in mode $i$, the revenue per unit time is $f_i(t,X_t)$ and the cost of switching from mode $i$ to mode $j$, at time $t$, is $c_{ij}(t,X_t)$. Let $(Y_t^1,\dots,Y_t^d)$ be the value function associated with the optimal switching problem, on the time interval $[t,T]$, i.e., $Y_t^i$ stands for the optimal expected profit if, at time $t$,
the production is in mode $i$. In this case, one can prove, under various assumptions, see \cite{AF}, \cite{AH}, and \cite{DHP}, that
$(Y_t^1,\dots,Y_t^d)$ solves the reflected BSDE
\begin{align}\label{eq6}
&Y^i_t=\xi_i+\int_t^Tf_i(s, X_s)ds-\int_t^TZ_s^idW_s+\Lambda_T^i-\Lambda_t^i,\notag\\
&Y^i_t\geq \max_{j\in A_i} \left (Y^j_t - c_{ij}(t,X_t)\right), \notag \\ 
&\int_0^T \left ( Y^i_t-\max_{j\in A_i} \left (Y^j_t -c_{ij}(t,X_t) \right ) \right )d\Lambda_t^i=0,
\end{align}
where $i\in\{1,\dots,d\}$, $0\leq t\leq T,$ and $A_i=\{1,\dots,d\}\setminus\{i\}$. In this case the reflected BSDE evolves in the closure
of the time-dependent domain
\begin{eqnarray}\label{eq6+}
D&=&\{(t,y)=(t, y_1,\dots,y_d)\in\mathbb R^{d+1}:\ 0\leq t\leq T,\notag\\
&&y_i \geq \max_{j\in A_i} \left ( y_j - c_{ij}(t,X_t) \right ), \mbox{ for all }i\in\{1,\dots,d\}\}.
\end{eqnarray}
On the boundary of $D$ a reflection occurs and in \cite{HT} the authors refer to this as an oblique reflection. While this oblique reflection seems to have no clear relation to what is referred to as an oblique reflection in the context of \eqref{Kn4}, \eqref{Kn4+},
it is still fair to refer to the problem in \eqref{eq6} as an obliquely reflected BSDE. However, we emphasize that the problems in \eqref{Kn4}, \eqref{Kn4+} and \eqref{eq6} are significantly different.

In this paper we consider the problem in \eqref{Kn4}, \eqref{Kn4+} in time-dependent domains and with underlying stochastic processes beyond Brownian motion. In light of \eqref{eq6}, \eqref{eq6+}, and corresponding developments for
reflected SDEs, see \cite{C}, \cite{CGK}, \cite{LS}, \cite{NO}, \cite{S}, and \cite{T}, it is natural to allow for time-dependent domains and in many cases this extra feature calls for additional arguments in comparison with the case of time-independent domains. In particular, we here consider \eqref{Kn4}, \eqref{Kn4+} in the context of time-dependent domains, and along the lines of \cite{GP}. In addition, we allow the BSDE to be driven by a Wiener-Poisson type process and our main result is a generalization of \cite{GP} and \cite{O} to a time-dependent setting. In general, it seems difficult to generalize \cite{GP} and its proofs beyond the assumption of convexity of the time-slices of the domain. Indeed, the assumption on convexity is heavily explored in \cite{GP} and \cite{O}. Beyond ensuring the existence of projections, convexity establishes the positivity of certain terms appearing when applying the Ito formula. In this sense, one may say that the arguments are slightly rigid as the structural assumption of convexity seems crucial. In our analysis,
it turns out that we are only able to pull the arguments of \cite{GP} through in the context of time-dependent domain having a similar rigidity in time. More precisely, in our case the time slices must be non-increasing and hence the domain must be non-expanding as a function of time. Under such a structural assumption though, we are able to generalize \cite{GP} and \cite{O} to a time-dependent setting. Finally, we note that it is an interesting open problem to understand if, in analogy with the connection between optimal switching problems and the problem in \eqref{eq6}, the problem in \eqref{Kn4}, \eqref{Kn4+} can be naturally associated to some stochastic optimization problem.

\setcounter{equation}{0} \setcounter{theorem}{0}
\section{Statement of main result}
\noindent
In this section we state our main result. To do this properly we first briefly discuss the geometry and processes of Wiener-Poisson type and define the reflected BSDE studied in this paper.

 \subsection{Geometry}  

Given $d\geq 1$, we let $\left\langle \cdot ,\cdot
\right\rangle $ denote the standard inner product on $%
\mathbb{R}^{d}$ and $\left\vert z\right\vert =\left\langle z,z\right\rangle
^{1/2}$ be the Euclidean norm of $z\in\mathbb R^d.$ Whenever $z\in\mathbb{R}^{d}$ and $r>0$, we let $B_r(z)$ and $S_r(z)$ denote the ball and sphere of radius $r$, centered at $z$, respectively, i.e. $B_{r}\left( z\right) =\left\{ y\in\mathbb{R}^{d}:\left\vert z-y\right\vert <r\right\} $ and $S_{r}\left( z\right)=\left\{ y\in\mathbb{R}^{d}:\left\vert z-y\right\vert =r\right\}$. Moreover, given $F\subset\mathbb{R}^{d}$, $E\subset\mathbb{R}
^{d}$, we let $\bar{F}$, $\bar{E}$ be the closure of $F$ and $E$,
respectively, and we let $d\left( y,E\right) $ denote the Euclidean distance
from 
$y\in\mathbb{R}^{d}$ to $E$.  Given $d\geq 1$, $T>0$ and an open, connected set $D^{\prime}\subset\mathbb{R}
^{d+1}$ 
we will refer to
\begin{equation*}
D=D^{\prime }\cap ([0,T]\times \mathbb{R}^{d}),  
\end{equation*}%
as a time-dependent domain. 

Given $D$ and $t\in \left[ 0,T\right] $, we
define the time sections of $D$ as 
\begin{equation}\label{eq:timeslice}
D_{t}=\left\{ z:\left( t,z\right) \in D\right\}.
\end{equation} We assume that
\begin{equation}\label{timedep+}
D_{t}\neq \emptyset, D_t\text{ is open, bounded and connected for every }t\in \left[ 0,T\right], 
\end{equation}%
and that
\begin{equation}
D_{t}\text{ is convex
for every }t\in \left[ 0,T\right].  \label{timedep+1}
\end{equation}%
Furthermore, following \cite%
{CGK}, we let
\begin{equation*}
l\left( r\right) =\sup_{\substack{ s,t\in \lbrack 0,T]  \\ \left\vert s-t\right\vert \leq r}}\,\sup_{z\in \overline{D_{s}}}d\left( z,D_{t}\right) ,
\end{equation*}
be the modulus of continuity of the variation of $D$ in time and we assume that
\begin{equation}\label{limitzero}
\lim_{r \to 0^{+}}l\left( r\right) =0.
\end{equation}
We also assume that
\begin{equation}\label{timedep+2}
D_{t'}\subseteq D_t\text{ whenever $t'\geq t$, }t', t\in \left[ 0,T\right]. 
\end{equation}%
Note that \eqref{timedep+2} implies that
\begin{equation*}
l\left( r\right) =\sup_{\substack{ t\in \lbrack 0,T], [t-r,t+r]\in [0,T]}}\sup_{z\in \overline{D_{t-r}}}d\left( z,D_{t+r}\right).
\end{equation*}%
We let $\partial D$ and $\partial D_{t}$, for $t\in \left[ 0,T\right] $,
denote the boundaries of $D$ and $D_{t}$, respectively, and we let $N_{t}\left(
z\right) $ denote the cone of inward normal vectors at $z\in \partial D_{t}$%
, $t\in \lbrack 0,T]$. Note that it follows from \eqref{timedep+1} that $N_{t}\left(
z\right)\neq\emptyset $ for every $z\in \partial D_{t}$, $t\in \lbrack 0,T]$. In general, the cone $N_{t}\left( z\right) $ of inward normal vectors at $z\in \partial
D_{t}$, $t\in \lbrack 0,T]$, is defined as being equal to the set consisting
of the union of the set $\left\{ 0\right\}$ and the set
\begin{equation*}
\left\{ v\in\mathbb{R}^{d}:v\neq 0,\exists \rho >0\text{ such that }B_{\rho }\left( z-\rho
v/\left\vert v\right\vert \right) \subset \mathbb{R}^{d}\setminus D_t\right\} .  
\end{equation*}%
Note that this definition does not rule out the possibility of several unit
inward normal vectors at the same boundary point. Given $N_{t}\left(
z\right) $, we let $N_{t}^{1}(z):=N_{t}(z)\cap S_{1}(0)$, so that $%
N_{t}^{1}(z)$ contains the set of vectors in $N_{t}(z)$ with unit length. In this paper we consider reflected BSDEs in the setting of time-dependent domains $D$ satisfying
\eqref{timedep+}-\eqref{timedep+2}. Furthermore, reflection at $z\in \partial D_{t}$, $t\in \lbrack 0,T]$, is considered in the direction of a
  unit spatial inward normal in the cone $N_{t}\left(
z\right)$.

 \subsection{Processes of Wiener-Poisson type}
Throughout the paper we let  
$$\left(\Omega ,\mathcal{F} ,\{ \mathcal{F}_t\}_{t \in [0,T]}, \mathbb{P}\right)$$ be a complete Wiener-Poisson space in $\mathbb R^n\times \mathbb R^m\setminus\{0\}$ with Levy measure $\lambda$. In particular, $\left(\Omega ,\mathcal{F} ,\mathbb P \right) $ is a complete probability space and $\{\mathcal{F}_t\},_{t\in[0,T]}$ is an increasing, right continuous family of complete sub $\sigma$-algebras of $\mathcal{F}$. We let $(W_t,\{F_t\})_{t \in [0,T]}$ be a standard Wiener process in $\R^n$ and $(\mu_t, \{\F_t\})_{ t\in [0,T]}$ be a martingale measure in $\R^m \setminus\{0\}$, which is assumed to be independent of $W$, and which corresponds to a standard Poisson random measure $p(t,A)$. Indeed, for any Borel measurable subset $A$ of $\R^m \setminus \{0\}$ such that the Levy measure $\lambda$ satisfies $\lambda(A) <+ \infty $, we have
\begin{equation*}
\mu_t(A) = p(t,A) - t \lambda(A)
\end{equation*}
where $p(t,A)$ satisfies
\begin{equation*}
E[ p(t,A) ] = t \lambda(A).
\end{equation*}
We let $U:=\R^m \setminus \{0\}$ and we let $\mathcal{U}$ be its Borel $\sigma$-algebra. We assume that $\{\F_t\}_{t\in [0,T]}$ is the filtration generated by $W_t$ and the jump process corresponding to the Poission random measure $p$, augmented with the $\mathbb P$-null sets of $\F$, i.e.,
\begin{equation*}
\F_t=\sigma\left( \int \int _{A \times [0,s]} p(ds,dx): s\leq t, A\in \mathcal{U} \right ) \vee  \sigma \left (W_s, s\leq t \right) \vee \F_0,
\end{equation*}
where $\F_0$ denotes the $\P$-null sets of $\F$ and $\sigma_1 \vee \sigma_2 $ denotes, given two $\sigma$-algebras $\sigma_1$ and $\sigma_2$, the $\si$-algebra generated by $\si_1 \cup \si_2$.

\subsection{Reflected BSDEs}  Given $T>0$, we let $\mathcal{D}\left( \left[ 0,T\right] ,
\mathbb{R}^{d}\right) $ denote the set of c\`{a}dl\`{a}g functions $v(t)=v_{t}:\left[ 0,T\right] \rightarrow
\mathbb{R}^{d}$, 
i.e., functions which are right continuous and have left limits. We denote the set of functions $w(t)=w_{t}:\left[ 0,T\right]
\rightarrow \mathbb{R} ^{d}$ with bounded variation by $\mathcal{BV}\left( \left[ 0,T\right] , \mathbb{R}^{d}\right) $ and we let $\left\vert w \right\vert $ denote the total
variation of $w \in \mathcal{BV}\left( \left[ 0,T\right] ,\mathbb{R}^{d}\right) $.
Recall that the total variation process $\left\vert w \right\vert $ is defined as
$$
\left\vert w \right\vert_t=\sup\sum_{k=1}^{n}| w_{t_i}-w_{t_{i-1}}|,\ 0\leq t\leq T,
$$
where the supremum is taken over all finite partitions $0=t_0<t_1<\dots<t_n=t$. Furthermore, we have that
\begin{eqnarray}\label{bff}
w_t=\int_0^t\nu_sd\left\vert w \right\vert_s
\end{eqnarray}
where $\nu_s$ is a vector of length 1, i.e., $|\nu_s|=1$ for $\left\vert w \right\vert$-almost all $s$. Let $$\left(
\Omega ,\mathcal{F}, \{\mathcal{F}_t\}, \mathbb{P}, W_t,\mu_t, t\in[0,T] \right)$$ be the complete Wiener-Poisson space in $\mathbb R^n\times \mathbb R^m\setminus\{0\}$, with Levy measure $\lambda$, as outlined above. Let $L^2(\Omega, \mathcal{F}_T, \mathbb P)$ be the space of square integrable, $\mathcal{F}_T$-adapted random variables and let $L^2(U, \mathcal{U},  \lambda; \R^d)$ be the space of functions which are $\mathcal{U}$-measurable, maps values in $U$ to $\R^d$, and which are square integrable on $U$ with respect to the Levy-measure $\lambda$. 
In the following we let
the norm 
$$\|z\|:=(\sum_{i,j}|z_{ij}|^2)^{1/2}$$
be defined on real-valued $(d\times n)$-dimensional matrices and we define the norm 
$$\| u(e) \| := \left (\int_V |u(e)|^2 \lambda(de) \right )^{1/2}$$
on $L^2(U, \mathcal{U},  \lambda; \R^d)$. Let $\xi=(\xi_1,\dots,\xi_d)$ be such that
\begin{eqnarray}\label{data}
\mbox{$\xi \in L^2(\Omega, \mathcal{F}_T, \mathbb P)$ and $\xi\in D_T$ a.s.}
\end{eqnarray}
Let $f: \Omega \times  [0,T] \times \R^d \times\R^{d\times n}\times L^2(U, \mathcal{U},  \lambda; \R^d) \to \R^d$ be a function such that
\begin{eqnarray}\label{data++}
(i)&&\mbox{$(\omega,t)\to f(\omega,t,y,z,u) $ is $\mathcal{F}_t$ progressively measurable whenever}\notag\\
 &&\mbox{$(y,z,u)\in \R^d \times\R^{d\times n}\times L^2(U, \mathcal{U},  \lambda; \R^d)$},\notag \\
(ii)&&\mbox{$E \bigl [\int_0 ^T |f(\omega,t,0,0,0)|^2 dt\bigr ] < \infty $},\notag\\
(iii)&&\mbox{$|f(\omega, t, y, z, u)-f(\omega, t, y', z', u')|\leq c(|y-y'|+\|z-z'\|+\|u-u'\|)$}\notag\\
&&\mbox{for some constant $c$ whenever}\notag\\
 &&\mbox{$(y,z,u), (y',z',u') \in \R^d \times \R^{d\times n} \times L^2(U, \mathcal{U},  \lambda; \R^d)$, $(\omega,t)\in \Omega \times  [0,T]$}.
\end{eqnarray}
In the context of BSDEs, $\xi$ and $f$ are usually referred to as terminal value and driver of the BSDE, respectively. We are now ready to formulate the notion of reflected BSDE considered in this paper.

 \begin{definition}\label{rbsde} Let $d\geq 1$ and $T>0$. Let $D\subset\mathbb{R}^{d+1}$ be a time-dependent domain satisfying \eqref{timedep+}. Given $(\xi,f)$ as in \eqref{data}-\eqref{data++}, a quadruple $(Y_t, Z_t, U_t, \Lambda_t)$ of progressively measurable processes with values in $\R^d \times \R^{d \times m} \times L^2(U, \mathcal{U}, \lambda; \R ^d) \times \R^d$
is said to be a solution to a reflected BSDE, with reflection in the inward spatial normal direction, in $D$, and with data $(\xi,f),$ if the following holds. $Y\in \mathcal{D}\left( \left[ 0,T\right] ,
\mathbb{R}^{d}\right) $, $Z$ and $U$ are predictable processes, and
\begin{eqnarray*}
(i)&&E\left [\sup _{0 \leq t \leq T} |Y_t|^2\right ] <\infty, \\
(ii)&& E \left [\int_0 ^T \|Z_t\|^2 dt +  \int _0^T \int _U|U_s(e)|^2 \lambda(de)ds \right ] < \infty,\notag\\
(iii)&&Y_t= \xi + \int_t ^T f(s,Y_s,Z_s,U_s) ds + \Lambda_T- \Lambda_t\notag\\
 &&- \int_t ^T Z_s dW_s - \int _t^T \int _U U_s(e) \mu(de,ds) \quad \mbox{a.s.}, \\
(iv)&& Y_t \in\overline{D_t} \quad \mbox{a.s.},
\end{eqnarray*}
 whenever $t \in [0, T]$. Furthermore, $\Lambda\in \mathcal{BV}\left( \left[ 0,T\right] ,%
\mathbb{R}
^{d}\right) $ and
\begin{eqnarray*}
(v)&&\Lambda_t=\int_{0}^{t^{}}\gamma _{s}d\left\vert \Lambda \right\vert
_{s},\ \gamma _{s}\in N_{s}^{1}\left( Y_{s}\right) \mbox{ whenever } Y_s \in \partial D_{s},\\
(vi)&& d\left\vert \Lambda \right\vert \left( \left\{ t\in \left[ 0,T\right]
:\left(t,Y_{t}\right) \in D\right\} \right)=0.
\end{eqnarray*}
\end{definition}

\subsection{Statement of the main result} Concerning reflected BSDEs we establish the following result.
\begin{theorem}\label{maint1} Let $d\geq 1$ and $T>0$. Let $D\subset \mathbb{R}^{d+1}$ be a time-dependent domain satisfying \eqref{timedep+}-\eqref{timedep+2} and assume the terminal data $\xi$ and driver $f$ satisfy \eqref{data}-\eqref{data++}. Then there exists a unique solution $(Y_t, Z_t, U_t, \Lambda_t)$ to the reflected BSDE, with reflection in the inward spatial normal direction, in $D$, and with data $(\xi,f),$  in the sense of Definition \ref{rbsde}.
\end{theorem}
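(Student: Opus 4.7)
The approach is the Moreau-Yosida penalization of \cite{GP}, as extended in \cite{O} to the Wiener-Poisson setting, implemented with the projection onto the time-dependent closed convex set $\overline{D_s}$. Let $\pi_s\colon\R^d\to\overline{D_s}$ denote this projection; by \eqref{timedep+1}, $\pi_s$ is single-valued and $1$-Lipschitz in $y$ uniformly in $s$. For each integer $n\geq 1$ I consider the unreflected, penalized BSDE
\begin{equation*}
Y^n_t = \xi + \int_t^T f(s,Y^n_s,Z^n_s,U^n_s)\,ds + n\int_t^T (\pi_s(Y^n_s)-Y^n_s)\,ds - \int_t^T Z^n_s\,dW_s - \int_t^T\!\!\int_U U^n_s(e)\,\mu(de,ds),
\end{equation*}
which has a unique solution $(Y^n,Z^n,U^n)$ by the classical existence/uniqueness theory for Wiener-Poisson BSDEs with Lipschitz driver. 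Setting $\Lambda^n_t := n\int_0^t(\pi_s(Y^n_s)-Y^n_s)\,ds$, the aim is to show that $(Y^n,Z^n,U^n,\Lambda^n)$ converges, as $n\to\infty$, to a quadruple solving the reflected problem of Definition \ref{rbsde}.

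The decisive a priori estimates exploit the non-expansion \eqref{timedep+2}. Since $D_T \ne \emptyset$, fix a deterministic $a\in D_T$; then $a\in\overline{D_s}$ for every $s\in[0,T]$ by \eqref{timedep+2}, and the defining property of the projection onto the convex set $\overline{D_s}$ yields
\begin{equation*}
\langle Y^n_s-a,\,\pi_s(Y^n_s)-Y^n_s\rangle \leq -|Y^n_s-\pi_s(Y^n_s)|^2.
\end{equation*}
Applying Ito's formula to $|Y^n_s-a|^2$, taking expectation, and combining with \eqref{data}--\eqref{data++}, BDG, and the boundedness of $\overline{D_0}$ (which contains every $\overline{D_s}$ by \eqref{timedep+2}), I extract uniform $L^2$-bounds on $(Y^n,Z^n,U^n)$ in the norms of Definition \ref{rbsde}(i)-(ii), together with the penalty control $n E[\int_0^T|Y^n_s-\pi_s(Y^n_s)|^2\,ds]\leq C$ and $E[|\Lambda^n|_T^2]\leq C$. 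A Cauchy argument then produces the limit: Ito applied to $|Y^n-Y^m|^2$ generates a cross-term which, via the convex monotonicity
\begin{equation*}
\langle (y-\pi_s(y))-(y'-\pi_s(y')),\,y-y'\rangle \geq 0,\qquad y,y'\in\R^d,\; s\in[0,T],
\end{equation*}
combined with Cauchy-Schwarz and the above penalty bound, vanishes as $\min(n,m)\to\infty$; the Lipschitz property of $f$ from \eqref{data++}(iii) and Gronwall close the Cauchy argument and deliver $(Y,Z,U)$. The penalty estimate forces $Y_s\in\overline{D_s}$ a.s., the sequence $\Lambda^n$ is uniformly of bounded variation and converges (along a subsequence) to some $\Lambda\in\mathcal{BV}$, and conditions (v)-(vi) of Definition \ref{rbsde} follow from the fact that the density $n(\pi_s(Y^n_s)-Y^n_s)$ vanishes whenever $Y^n_s\in\overline{D_s}$ and from the convex characterization of $N^1_s(Y_s)$. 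Uniqueness is standard: Ito on $|Y-Y'|^2$ for two solutions produces the reflection cross-term $\int_0^T\langle Y_s-Y'_s,d\Lambda_s-d\Lambda'_s\rangle\leq 0$ by (v)-(vi) and convexity, after which Gronwall forces $Y=Y'$, and uniqueness of the martingale representation then gives $Z=Z'$, $U=U'$, $\Lambda=\Lambda'$.

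The main obstacle is the interaction between the time-dependence of $\pi_s$ and $\overline{D_s}$ and the Poisson jumps. The non-expansion \eqref{timedep+2} is essential precisely because time runs backward in a BSDE: it guarantees that once $Y^n_s\in\overline{D_s}$ one also has $Y^n_s\in\overline{D_{s-h}}$ for $h>0$, so that as $s$ decreases the state constraint never "moves in" on $Y^n$. Without this the sign of the Ito correction measuring how $\pi_s$ evolves in $s$ (quantitatively controlled by $l(r)$ from \eqref{limitzero}) cannot be kept favourable, which is exactly the rigidity point at which the GP analysis would break. A second delicate point is that the penalty acts only through a $ds$ integral, so the jumps of $Y^n$ come entirely from $\int U^n_s(e)\mu(de,ds)$; one must verify $Y_{s-}+U_s(e)\in\overline{D_s}$ $\lambda$-a.e.\ in the limit, achieved by a convex-projection argument as in \cite{O}, which extends to the present time-dependent case because \eqref{timedep+2} keeps the state constraint compatible across small time increments.
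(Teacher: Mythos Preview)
Your overall architecture---penalization, a priori estimates, Cauchy argument, passage to the limit, uniqueness via Ito---matches the paper. The gap is quantitative, and it sits exactly where the time-dependence bites.

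From Ito on $|Y^n-a|^2$ you correctly extract $nE\bigl[\int_0^T|Y^n_s-\pi_s(Y^n_s)|^2\,ds\bigr]\leq C$, i.e.\ $E[\int d(Y^n_s,D_s)^2\,ds]\leq C/n$. But this is \emph{not} enough to close the Cauchy step. After using the convex inequality $\langle y'-y,\,y-\pi_s(y)\rangle\leq\langle y'-\pi_s(y'),\,y-\pi_s(y)\rangle$ on the reflection cross-term in $|Y^n-Y^m|^2$, what remains is essentially
\[
(n+m)\,E\!\int_0^T d(Y^n_s,D_s)\,d(Y^m_s,D_s)\,ds
\;\leq\;(n+m)\Bigl(E\!\int d(Y^n)^2\Bigr)^{1/2}\Bigl(E\!\int d(Y^m)^2\Bigr)^{1/2}.
\]
With only the $C/n$ bound this gives $(n+m)/\sqrt{nm}$, which does not vanish. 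One needs the sharper estimate $E[\int_0^T d(Y^n_s,D_s)^2\,ds]\leq C/n^2$ (and its $\sup$ companion $E[\sup_t d(Y^n_t,D_t)^2]\leq C/n$), which then yields $(n+m)/(nm)=1/n+1/m\to 0$.

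The paper obtains this sharper decay by a separate argument: it builds smooth approximating domains $D_\epsilon$, sets $\varphi_\epsilon(t,y)=d(y,D_{\epsilon,t})^2$, and applies Ito to $\varphi_\epsilon(t,Y^n_t)$. The crucial point is that three terms in that Ito expansion carry the right sign: the Hessian term and the jump remainder by convexity of $\varphi_\epsilon$ in $y$, and the drift term $\partial_t\varphi_\epsilon(t,y)\geq 0$ precisely because of the non-expansion hypothesis \eqref{timedep+2}. This is where \eqref{timedep+2} is actually used---not merely to anchor a point $a\in D_T$ inside every slice, as you suggest, but to control the time derivative of the squared distance. Your sketch omits this entire layer (the $D_\epsilon$ approximation and the second Ito computation), and without it the Cauchy argument does not go through.
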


\subsection{Organization of the paper} The rest of the paper is organized as follows. Section \ref{sec:prel} is of preliminary nature and we here focus on the geometry of the time-dependent domain as well as smooth approximations of it. We also recall the Ito formula in the context of Wiener-Poisson processes. In section \ref{sec:lemmas} we introduce a sequence of non-reflected BSDEs, constructed by penalization techniques, and develop a number of technical lemmas for these. Finally, using the results of section \ref{sec:lemmas}, we prove the main result in section \ref{sec:proof}.

\section{Preliminaries} \label{sec:prel}
\noindent
 Let $d\geq 1$ and $T>0$. Let $D\subset\mathbb{R}^{d+1}$ be a time-dependent domain satisfying \eqref{timedep+}~-~\eqref{timedep+2}. Let $N =N_{t}(z)=N (t,z)$ denote the cone of inward normal vectors given for all $z\in \partial D_{t}$, $t\in \lbrack 0,T]$. Note that by \eqref{timedep+1} there exists, for any $y\in\mathbb{R}^{d}\setminus \overline{D}_{t}$, $t\in \lbrack 0,T]$,
at least one projection of $y$ onto $\partial D_{t}$ along $N_{t}$,
denoted $\pi\left( t,y\right) $, which
satisfies%
\begin{equation*}
\left\vert y-\pi\left(t, y\right) \right\vert=d\left( y,D_{t}\right).  
\end{equation*}
To have $\pi(t,\cdot)$ well-defined for all $y\in\mathbb R^d$ we also let $\pi\left(t, y\right)=y$ whenever $y\in\overline{D_t}$. The following lemma summarizes a few standard results from convex analysis.
\begin{lemma}\label{lemmaa} Let $D\subset
\mathbb{R}^{d+1}$ be a time-dependent domain satisfying \eqref{timedep+} and assume \eqref{timedep+1} and \eqref{timedep+2}. Then the following holds whenever $t\in[0,T]$:
\begin{eqnarray*}
(i)&&\langle y^\prime-y,y-\pi\left(t, y\right)\rangle \leq 0,\mbox{ for } (y,y^\prime) \in \mathbb R^d\times\overline{D_t}, \mbox{ and }\notag \\
(ii)&&\langle y^\prime-y, y-\pi(t,y)\rangle \leq \langle y^\prime-\pi(t,y^\prime),y-\pi(t,y)\rangle,\notag \\
(iii)&& |\pi(t,y) - \pi(t,y')| \leq |y-y'|,  
\end{eqnarray*}
whenever $(y,y^\prime) \in \mathbb R^d\times\mathbb R^d$. Furthermore, there exists $P_T\in D_T$ and $\gamma$, $1\leq\gamma<\infty$, depending on $d(P_T, \partial D_T)$, such that
\begin{eqnarray*}
(iv)&& \langle y-P_T, y-\pi(t,y)\rangle \geq \gamma^{-1} |y-\pi(t,y)|,   \mbox{ for any } y\in \mathbb R^d, t\in[0,T].
\end{eqnarray*}
\end{lemma}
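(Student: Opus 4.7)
Fix $t \in [0,T]$. Since $D_t$ is nonempty, open, bounded and convex by \eqref{timedep+}--\eqref{timedep+1}, classical convex analysis guarantees that the metric projection onto the closed convex set $\overline{D_t}$ is well-defined, single-valued, and agrees with $\pi(t,\cdot)$ as defined in the excerpt (the direction $y - \pi(t,y)$ is automatically an outward normal direction, so projecting along $N_t$ and projecting in the Euclidean sense coincide). Thus the proof reduces entirely to standard facts about the Euclidean projection onto a closed convex set, together with one additional ingredient for part (iv), namely the inclusion $P_T \in D_T \subseteq D_t$ coming from the monotonicity assumption \eqref{timedep+2}.

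For (i), if $y \in \overline{D_t}$ then $y = \pi(t,y)$ and the statement is trivial; otherwise, given $y' \in \overline{D_t}$, convexity of $\overline{D_t}$ implies that $\pi(t,y) + s(y' - \pi(t,y)) \in \overline{D_t}$ for all $s \in [0,1]$. The function $s \mapsto |y - \pi(t,y) - s(y' - \pi(t,y))|^2$ has a minimum at $s = 0$, and computing its one-sided derivative at $0$ yields $\langle y - \pi(t,y), y' - \pi(t,y)\rangle \leq 0$, which is exactly (i) after rewriting $y' - y = (y' - \pi(t,y)) - (y - \pi(t,y))$. For (ii), we decompose
\[
\langle y' - y, y - \pi(t,y)\rangle = \langle y' - \pi(t,y'), y - \pi(t,y)\rangle + \langle \pi(t,y') - y, y - \pi(t,y)\rangle,
\]
and apply (i) with the test point $\pi(t,y') \in \overline{D_t}$ to conclude that the second term on the right is nonpositive.

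For (iii), set $p = \pi(t,y)$ and $p' = \pi(t,y')$. Applying (i) twice (once with $q = p' \in \overline{D_t}$ as the test point for $y$, and once with $q = p \in \overline{D_t}$ as the test point for $y'$) gives $\langle p' - p, y - p\rangle \geq 0$ and $\langle p - p', y' - p'\rangle \geq 0$. Summing and rearranging produces $|p - p'|^2 \leq \langle p - p', y - y'\rangle$, from which $|p - p'| \leq |y - y'|$ follows by Cauchy--Schwarz.

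For (iv), choose any $P_T$ in the interior of $D_T$ and set $r := d(P_T, \partial D_T) > 0$; the monotonicity condition \eqref{timedep+2} gives $D_T \subseteq D_t$, hence $B_r(P_T) \subseteq \overline{D_t}$. If $y \in \overline{D_t}$ both sides of (iv) vanish; otherwise, writing $\nu := (y - \pi(t,y))/|y - \pi(t,y)|$, we observe that $\nu$ is a unit outward normal to $\overline{D_t}$ at $\pi(t,y)$, so the supporting hyperplane inequality at $\pi(t,y)$ applied to every point $P_T + rw \in \overline{D_t}$, $|w| \leq 1$, gives $\langle P_T - \pi(t,y), \nu\rangle \leq -r$. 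Consequently
\[
\langle y - P_T, y - \pi(t,y)\rangle = |y - \pi(t,y)|^2 - |y - \pi(t,y)|\langle P_T - \pi(t,y), \nu\rangle \geq r\,|y - \pi(t,y)|,
\]
so (iv) holds with $\gamma = 1/r$. The step that actually \emph{uses} the time-dependence hypothesis \eqref{timedep+2} is (iv); the other three parts are pointwise-in-$t$ statements about a single convex slice. The only mild subtlety to check is the identification of the Euclidean projection with the projection along the normal cone $N_t$ appearing in the definition of $\pi(t,\cdot)$, which is immediate from the supporting hyperplane characterisation of $N_t$.
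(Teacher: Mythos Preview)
Your proof is correct. The paper does not actually give a proof of this lemma at all: it simply introduces it with the phrase ``The following lemma summarizes a few standard results from convex analysis'' and moves on. What you have written is precisely the standard convex-analysis argument the authors are alluding to --- the variational characterisation of the metric projection for (i), the decomposition plus (i) for (ii), the firm nonexpansiveness argument for (iii), and the supporting-ball argument for (iv), with the monotonicity hypothesis \eqref{timedep+2} used exactly where it is needed, namely to ensure $B_r(P_T)\subseteq D_T\subseteq D_t$ uniformly in $t$. One cosmetic point: your choice $\gamma=1/r$ need not satisfy $\gamma\geq 1$ if $r>1$; to match the stated range $1\leq\gamma<\infty$ simply take $\gamma=\max(1,1/r)$, which only weakens the inequality.
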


\subsection{Geometry of time-dependent domains - smooth approximations}
Note that the assumptions in \eqref{timedep+}, \eqref{timedep+1}, and \eqref{timedep+2} contain no particular smoothness assumption. Instead, we will in the following construct smooth approximations of $D$ to enable the use of Ito's formula.  In the following we let $h(t,y)=d(y,D_t)$ whenever $y\in\mathbb R^d$, $t\in [0,T]$, with the convention that $h(t,y)=0$ if $y\in\overline{D_t}$. Assuming that $D_T\neq\emptyset\neq D_0$ we
let $h(t,y)=h(T,y)$ whenever $y\in\mathbb R^d$, $t>T$, and $h(t,y)=h(0,y)$ whenever $y\in\mathbb R^d$, $t<0$. Using this notation we see that
 $$\overline D=\{(t,x)\in [0,T]\times\mathbb R^d|\ h(t,x)=0\}.$$ Let
$\phi=\phi(s,y)$ be a smooth mollifier in $\mathbb R^{d+1}$, i.e., $\phi\in C_0^\infty(\mathbb R^{d+1})$, $0\leq\phi\leq 1$, the support
 of $\phi$ is contained in the Euclidean unit ball in $\mathbb R^{d+1}$, centered at $0$, and $\int\phi dyds=1$. Let, for $\delta>0$ small,
$\phi_\delta(s,y)=\delta^{n+1}\phi(\delta^{-1}s,\delta^{-1}y)$. Based on $\phi_\delta$  we let, whenever $(t,y)\in\mathbb R\times\mathbb R^d$,
\begin{eqnarray*}
h_{\delta}(t,y)=(\phi_{\delta}\ast h)(t,y)=\int_{\mathbb R}\int_{\mathbb R^n}\phi_{\delta}(t-s,y-x)h(s,x)dxds
\end{eqnarray*}
be a smooth mollification of $h$. Furthermore, we let
\[
 h ( F, G ) = \max  (  \sup   \{ d ( y, F ) : y \in G \}, \sup \{ d ( y, G ) : y \in F \} ) 
\] 
denote the Hausdorff distance
between the sets $ F, G \subset \mathbb R^d$. Based on $h_{\delta}$ we introduce a smooth approximation of $D$ as follows. Given $\eta$ fixed and $\delta>0$, we let
\begin{equation*}
D^{\eta}_{\delta} = \{(t,x)\in [0,T]\times\mathbb R^d|h_\delta(t,x)< \eta\}.
\end{equation*}
Then $D^{\eta}_{\delta}$ converges to $D^{\eta}:= \{(t,x)\in [0,T]\times\mathbb R^d|h(t,x)< \eta\}$ in the Hausdorff distance sense as $\delta\to 0$. Note that $D^\eta_\delta$ is a $C^\infty$-smooth domain. Hence, letting $\delta, \eta \to 0$ we have the following lemma.
\begin{lemma}\label{lemmauu1-} Let $D\subset
\mathbb{R}
^{d+1}$ be a time-dependent domain satisfying \eqref{timedep+}~-~\eqref{timedep+2}. Then, for any
$\epsilon>0$ there exists a time-dependent domain $D_\epsilon\subset \mathbb{R} ^{d+1}$ satisfying \eqref{timedep+}~-~\eqref{timedep+2} such that $D_\epsilon$ is $C^\infty$-smooth and 
\begin{eqnarray*}
h(D_{t},D_{\epsilon,t})<\epsilon\mbox{ for all $t\in[0,T]$},
\end{eqnarray*}
where $D_t$ is as defined in \eqref{eq:timeslice}, and $D_{\e,t}=\{x: (x,t) \in D_\e\}$.
\end{lemma}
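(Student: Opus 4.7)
The plan is to verify that the sublevel-set construction $D^\eta_\delta=\{(t,x)\in[0,T]\times\mathbb{R}^d : h_\delta(t,x)<\eta\}$ sketched immediately before the lemma actually produces, for appropriately chosen $\delta=\delta(\epsilon)$ and $\eta=\eta(\epsilon)$, a time-dependent domain $D_\epsilon$ satisfying all the claims. I would begin by recording the structural properties of $h(t,y)=d(y,D_t)$ (with the constant-in-time extension outside $[0,T]$) that are inherited by $h_\delta$ under mollification. Since $D_t$ is convex, $y\mapsto h(t,y)$ is convex for each fixed $t$; since \eqref{timedep+2} gives $D_{t_2}\subseteq D_{t_1}$ whenever $t_1\leq t_2$, the function $t\mapsto h(t,y)$ is non-decreasing for each fixed $y$; and $h$ is $1$-Lipschitz in $y$, with a modulus of continuity in $t$ controlled by $l$ from \eqref{limitzero}. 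All three properties are preserved by convolution with the non-negative kernel $\phi_\delta$, so $h_\delta\in C^\infty$ inherits them, and moreover $\sup_{[0,T]\times K}|h_\delta-h|\to 0$ as $\delta\to 0$ for every compact $K\subset\mathbb{R}^d$, with rate controlled by $l(\delta)+\delta$.

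Next I would verify the geometric conditions on the time-slices $D_{\epsilon,t}=\{y:h_\delta(t,y)<\eta\}$. Openness is automatic from continuity of $h_\delta$; convexity follows from convexity of $h_\delta(t,\cdot)$; boundedness follows from boundedness of $D_t$ combined with $h_\delta(t,y)\to\infty$ as $|y|\to\infty$; non-emptiness (for small $\epsilon$) follows from the inclusion $D_t\subseteq D_{\epsilon,t}$ established in the Hausdorff step below; and the inclusion $D_{\epsilon,t_2}\subseteq D_{\epsilon,t_1}$ for $t_1\leq t_2$ follows from monotonicity of $h_\delta$ in $t$. The modulus-of-continuity condition \eqref{limitzero} for $D_\epsilon$ is controlled by the joint Lipschitz bounds on $h_\delta$. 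For smoothness of the boundary I would use the following elementary observation: if $f\in C^\infty$ is convex and $f(y_0)=c>\inf f$, then $\nabla f(y_0)\ne 0$, since vanishing of $\nabla f$ at $y_0$ would force $y_0$ to be a global minimizer. Applying this to $f=h_\delta(t,\cdot)$ at level $c=\eta>0$, and noting that $\inf_y h_\delta(t,y)=0$ uniformly in $t$ once $\delta$ is small enough (using non-emptiness and openness of the slices of $\overline D$ together with compactness of $[0,T]$), one gets $\nabla_y h_\delta\ne 0$ on each time-slice of $\partial D_\epsilon$; combined with the strict monotonicity in $t$ this yields a non-vanishing full spacetime gradient and hence a $C^\infty$ spacetime boundary.

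Finally I would establish the Hausdorff bound. Given $\epsilon>0$, choose $\delta>0$ small enough that $\sup_{[0,T]\times K}|h_\delta-h|<\epsilon/2$ on a compact set $K$ containing all slices $\overline{D_t}$ and their $\epsilon$-neighborhoods, and set $\eta=\epsilon/2$. For $y\in D_t$ we have $h(t,y)=0$, hence $h_\delta(t,y)<\epsilon/2=\eta$, so $y\in D_{\epsilon,t}$; conversely, for $y\in D_{\epsilon,t}$ we have $h_\delta(t,y)<\eta$, hence $h(t,y)<\eta+\epsilon/2=\epsilon$, so $d(y,D_t)<\epsilon$. Together these give $h(D_t,D_{\epsilon,t})<\epsilon$. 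I expect the main obstacle to lie in the smoothness step: although $h_\delta$ is smooth as a function, smoothness of the boundary $\partial D_\epsilon$ requires ruling out critical points on the relevant level set, and for convex but not strictly convex functions this must be verified uniformly in $t$. The monotonicity hypothesis \eqref{timedep+2} is precisely what makes the spacetime gradient non-degenerate, which is one reason the lemma's structural restriction on time-dependence cannot be dispensed with.
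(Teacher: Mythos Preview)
Your proposal is correct and follows exactly the paper's construction---sublevel sets $D^\eta_\delta=\{h_\delta<\eta\}$ of the mollified distance function---but you supply considerably more verification than the paper does; the paper simply records the construction and asserts that $D^\eta_\delta$ is $C^\infty$-smooth and converges in the Hausdorff sense, without checking the individual conditions \eqref{timedep+}--\eqref{timedep+2} or the regular-value argument for smoothness. One minor remark: you invoke ``strict monotonicity in $t$'' to get a non-vanishing spacetime gradient, but only (non-strict) monotonicity was established; this is harmless, since your convexity argument already forces $\nabla_y h_\delta\neq 0$ on the level set $\{h_\delta=\eta\}$, which by itself makes the full gradient non-zero.
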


 Let, for all $t\in \lbrack 0,T]$, $N_\epsilon =N_{\epsilon,t}(z)=N_\epsilon(t,z)$ denote the cone of inward normal vectors at $z \in \partial D_{\e,t}$. Due to the smoothness of $\partial D_{\epsilon,t}$, $N_\epsilon(t,z)$ consists of a single vector. For any $y\in
\mathbb{R}
^{d}\setminus \overline{D}_{\e,t}$, $t\in \lbrack 0,T]$, we let
$\pi_\epsilon\left( t,y\right) $ denote the projection of $y$ onto $\partial D_{\epsilon,t}$ along this unique direction. To have $\pi_\epsilon(t,\cdot)$ well-defined for all $y\in\mathbb R^d$ we also let $\pi_\epsilon\left(t, y\right)=y$ whenever $y\in\overline{D}_{\epsilon,t}$.
In this setting, the following lemma  can be proven as Lemma 2.2  in \cite{GP} as we are only considering fixed time slices $D_t$ of $D$.
\begin{lemma}
\label{lemmauu1}
Let $D\subset\mathbb{R}^{d+1}$ be a time-dependent domain satisfying \eqref{timedep+}~-~\eqref{timedep+2} and let $D_{\epsilon}$
be constructed as above. There exists a constant $c$ such that, if $\epsilon\in (0,1)$, $y\in\mathbb R^d$ and $t\in[0,T]$, then
\begin{eqnarray*}
(i)&&|\pi(t,y)-\pi_\epsilon(t,y)| \leq c \sqrt{\epsilon^2+\epsilon d(y, D_{\epsilon,t})},\\
(ii)&&|\pi(t,y)-\pi_\epsilon(t,y)| \leq c \sqrt{\epsilon^2+\epsilon d(y, D_t)}.
\end{eqnarray*}
\end{lemma}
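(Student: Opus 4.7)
The strategy is to imitate the proof of Lemma 2.2 in \cite{GP} slice-by-slice. Fix $t \in [0,T]$; by Lemma~\ref{lemmauu1-}, the closed convex sets $\overline{D_t}$ and $\overline{D}_{\epsilon,t}$ in $\mathbb{R}^d$ satisfy $h(D_t, D_{\epsilon,t}) < \epsilon$. The time-dependent structure plays no further role in this argument, and in particular the monotonicity hypothesis \eqref{timedep+2} is not used; only the convexity of each slice and the Hausdorff closeness of the two convex bodies matter.

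If $y \in \overline{D_t} \cup \overline{D}_{\epsilon,t}$ then one of $\pi(t,y)$ or $\pi_\epsilon(t,y)$ already equals $y$, and the Hausdorff control directly yields $|\pi(t,y) - \pi_\epsilon(t,y)| \leq \epsilon$, which is dominated by both $\sqrt{\epsilon^2 + \epsilon\, d(y, D_t)}$ and $\sqrt{\epsilon^2 + \epsilon\, d(y, D_{\epsilon,t})}$. The interesting case is $y \notin \overline{D_t} \cup \overline{D}_{\epsilon,t}$. Writing $\pi := \pi(t,y)$ and $\pi_\epsilon := \pi_\epsilon(t,y)$, I would use the Hausdorff bound to select bridging points $p \in \overline{D_t}$ with $|p - \pi_\epsilon| \leq \epsilon$ (take $p$ to be the projection of $\pi_\epsilon$ onto $\overline{D_t}$) and $q \in \overline{D}_{\epsilon,t}$ with $|q - \pi| \leq \epsilon$.

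Applying the variational characterization of the projection onto a closed convex set, namely $\langle y - \pi, z - \pi \rangle \leq 0$ for every $z \in \overline{D_t}$ and the analogue for $\pi_\epsilon$, with the test points $z = p$ and $z = q$ respectively, and using Cauchy--Schwarz on the cross-terms $\langle y - \pi, p - \pi_\epsilon \rangle$ and $\langle y - \pi_\epsilon, q - \pi \rangle$, one obtains after addition the key quadratic inequality
\begin{equation*}
|\pi - \pi_\epsilon|^2 \leq \epsilon\bigl(|y - \pi| + |y - \pi_\epsilon|\bigr).
\end{equation*}
Statement (i) follows by bounding $|y - \pi| \leq |y - p| \leq |y - \pi_\epsilon| + \epsilon = d(y, D_{\epsilon,t}) + \epsilon$ and substituting into the right-hand side; statement (ii) follows symmetrically from $|y - \pi_\epsilon| \leq d(y, D_t) + \epsilon$. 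I expect the main obstacle to be the algebraic bookkeeping in deriving the above quadratic estimate: one has to combine two projection inequalities referring to \emph{different} convex sets, and the bridging points $p$ and $q$ contribute perturbation terms of order $\epsilon$ that must be absorbed uniformly so that the left-hand side collapses cleanly into $|\pi - \pi_\epsilon|^2$. Once this estimate is in hand, both stated bounds are immediate.
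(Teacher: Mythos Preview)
Your proposal is correct and matches the paper's approach exactly: the paper does not write out a proof but simply remarks that the lemma ``can be proven as Lemma~2.2 in \cite{GP} as we are only considering fixed time slices $D_t$ of $D$,'' which is precisely the slice-by-slice adaptation you outline. Your derivation of the key quadratic inequality $|\pi-\pi_\epsilon|^2 \le \epsilon(|y-\pi|+|y-\pi_\epsilon|)$ via the two projection inequalities with bridging points is the standard argument, and the bookkeeping you flag as the main obstacle indeed collapses cleanly once the two inequalities are added.
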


The following lemma is a corollary of Lemma \ref{lemmauu1}.
\begin{lemma} \label{lemmauu2} Let $D$ and $D_{\epsilon}$ be as in the statement of Lemma \ref{lemmauu1}. There exists a constant $c$ such that, if
$\epsilon\in (0,1)$ and $y\in\mathbb R^d$, $t\in[0,T]$, then
\begin{eqnarray*}
(i)&&|\pi(t,y)-\pi_\epsilon(t,y)| \leq c \sqrt{\epsilon}(1+d(y, D_{\epsilon,t})),\\
(ii)&&|\pi(t,y)-\pi_\epsilon(t,y)|\leq c \sqrt{\epsilon}\sqrt{d(y, D_{t,\epsilon})}\mbox{ whenever } d(y,D_{\epsilon,t})>\epsilon.
\end{eqnarray*}
\end{lemma}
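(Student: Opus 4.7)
The plan is to derive both inequalities directly from Lemma \ref{lemmauu1}(i) by elementary algebraic manipulations; no new geometric input is needed since the hard work of comparing $\pi(t,\cdot)$ and $\pi_\epsilon(t,\cdot)$ has already been done there.

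For part (i), I start from $|\pi(t,y)-\pi_\epsilon(t,y)| \leq c\sqrt{\epsilon^2+\epsilon d(y,D_{\epsilon,t})} = c\sqrt{\epsilon}\sqrt{\epsilon + d(y,D_{\epsilon,t})}$. Writing $d = d(y,D_{\epsilon,t})$ and using $\epsilon < 1$, I get $\sqrt{\epsilon + d} \leq \sqrt{1 + d}$. A routine case split, or the elementary inequality $\sqrt{1+d} \leq 1 + \sqrt{d}$ followed by $\sqrt{d} \leq 1 + d$, then yields $\sqrt{1+d} \leq 2(1+d)$. Combining these gives $|\pi(t,y) - \pi_\epsilon(t,y)| \leq 2c\sqrt{\epsilon}(1 + d(y,D_{\epsilon,t}))$, which is (i) with a possibly larger constant.

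For part (ii), under the extra hypothesis $d(y,D_{\epsilon,t}) > \epsilon$ I note that $\epsilon^2 < \epsilon\, d(y,D_{\epsilon,t})$, so
\begin{equation*}
\epsilon^2 + \epsilon\, d(y,D_{\epsilon,t}) \leq 2\epsilon\, d(y,D_{\epsilon,t}).
\end{equation*}
Inserting this into Lemma \ref{lemmauu1}(i) yields
\begin{equation*}
|\pi(t,y) - \pi_\epsilon(t,y)| \leq c\sqrt{2}\sqrt{\epsilon}\sqrt{d(y,D_{\epsilon,t})},
\end{equation*}
which is (ii) after absorbing the $\sqrt{2}$ into $c$.

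There is no real obstacle: the whole statement is a cosmetic repackaging of Lemma \ref{lemmauu1}(i), chosen to split off the regime $d(y,D_{\epsilon,t}) > \epsilon$ where the cleaner bound $\sqrt{\epsilon\, d}$ holds, from the complementary regime where one must settle for $\sqrt{\epsilon}(1+d)$. The only thing to be careful about is using Lemma \ref{lemmauu1}(i) (bounded in terms of $d(y,D_{\epsilon,t})$) rather than (ii) (bounded in terms of $d(y,D_t)$), since both parts of Lemma \ref{lemmauu2} are stated with $d(y,D_{\epsilon,t})$ on the right-hand side.
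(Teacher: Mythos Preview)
Your proposal is correct and matches the paper's approach: the paper simply states that Lemma~\ref{lemmauu2} is a corollary of Lemma~\ref{lemmauu1} without giving any further argument, and your elementary manipulations of the bound in Lemma~\ref{lemmauu1}(i) are exactly the kind of derivation intended.
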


We here also recall Ito's formula in the context of Wiener-Poisson processes, see \cite{OS}. Here and in the following, we denote by $\C^{1,2}([0,T]\times \R^d, \R)$ the space of functions $\varphi(t,y) : [0,T]\times \R^d \to \R$ which are once continuosly differentiable with respect to $t\in [0,T]$ and twice continuosly differentiable with respect to $y \in \R^d$  and we let $A^\ast$ denote the transpose of the matrix $A$.

\begin{lemma}\label{lemmauu2+}
Let $Y_t$ be a Levy process such that
\begin{equation*}
dY_t = f_t dt + \sigma _t dW_t + \int _U U_t(e) \mu (de,dt)
\end{equation*}
and let $\varphi(t,y) \in \C^{1,2}([0,T]\times \R^d, \R)$. Then
\begin{eqnarray*}
d\varphi(t, Y_t) &=& \partial_t\varphi(t, Y_{t}) dt+ (\nabla \varphi(t, Y_{t^-})) \left [  f_t dt + \sigma_t dW_t +  \int _U U_s(e) \mu (de,ds) \right] \notag \\
&+& \sum_{i,j}\frac{1}{2}  (\sigma_t\sigma_t^\ast)_{ij}\partial^2_{y_iy_j}\varphi_\epsilon(t, Y_{t}) dt \notag \\
&+& \int_U \left [\varphi(t,Y_{t^-} + U_t(e))-\varphi(t,Y_{t^-}) - \langle \nabla \varphi(t,Y_{t^-}), U_t(e) \rangle \right] p(de,dt).
\end{eqnarray*}
\end{lemma}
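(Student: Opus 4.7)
The plan is to deduce the formula from two building blocks: the classical Ito formula for continuous semimartingales and an explicit accounting of the jumps of the driving Poisson random measure. The key algebraic identity to be exploited throughout is $\mu(de,dt) = p(de,dt) - \lambda(de)\, dt$, which lets one freely shuttle first-order jump contributions between the martingale measure $\mu$ and the raw measure $p$ via their compensators.

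First I would truncate. Splitting the mark space as $U = U_\epsilon \cup U^\epsilon$ with $U_\epsilon = \{e : |e| > \epsilon\}$ and $U^\epsilon = \{e : 0 < |e| \leq \epsilon\}$, one rewrites
$$dY_t = f_t^\epsilon\, dt + \sigma_t\, dW_t + \int_{U_\epsilon} U_t(e)\, p(de,dt) + \int_{U^\epsilon} U_t(e)\, \mu(de,dt),$$
where $f_t^\epsilon := f_t - \int_{U_\epsilon} U_t(e)\, \lambda(de)$. Since $\lambda(U_\epsilon) < \infty$, the Poisson measure restricted to $U_\epsilon$ fires at an a.s. locally finite set of times $\{\tau_i\}_{i \geq 1}$, so the process $Y^\epsilon$ obtained by discarding the $U^\epsilon$-martingale is a continuous semimartingale on each interval $(\tau_{i-1}, \tau_i)$ with isolated jumps at the $\tau_i$.

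Second, on each interval $(\tau_{i-1}, \tau_i)$ I would apply the classical Ito formula to $\varphi(t, Y^\epsilon_t)$, producing the time derivative, the gradient paired with $f^\epsilon\, dt + \sigma\, dW$, and the Hessian trace term $\tfrac{1}{2}\sum_{i,j}(\sigma\sigma^\ast)_{ij} \partial^2_{y_iy_j}\varphi\, dt$. At each firing time $\tau_i$ I add the exact jump $\varphi(\tau_i, Y^\epsilon_{\tau_i^-} + U_{\tau_i}(e_i)) - \varphi(\tau_i, Y^\epsilon_{\tau_i^-})$, which packages as $\int_0^t \int_{U_\epsilon} [\varphi(s, Y^\epsilon_{s^-} + U_s(e)) - \varphi(s, Y^\epsilon_{s^-})]\, p(de,ds)$. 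Adding and subtracting the compensator $\int_{U_\epsilon} \nabla\varphi \cdot U_t(e)\, \lambda(de)\, dt$ simultaneously replaces $f^\epsilon$ by $f$ in the drift and converts the raw $p$-integral of $\nabla\varphi \cdot U$ into the compensated $\mu$-integral, while leaving the centred residue $[\varphi(Y+U) - \varphi(Y) - \nabla\varphi \cdot U]$ against $p$ exactly as in the statement. This reassembles the claimed formula, but with integration restricted to $U_\epsilon$ in the last term and with the extra $U^\epsilon$-martingale yet to be absorbed.

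The main obstacle will be the passage $\epsilon \to 0$. Using the Taylor remainder $\varphi(y+u) - \varphi(y) - \nabla\varphi(y)\cdot u = \int_0^1 (1-\theta)\langle D^2\varphi(y + \theta u) u, u\rangle\, d\theta$, one bounds the small-jump integrand by $c|u|^2$ on any compact subset of $\mathbb{R}^d$. Localising by a stopping time $\tau_N$ that keeps $|Y|$ bounded and invoking Ito isometry for $\mu$ together with the square-integrability hypothesis $E\int_0^T \int_U |U_s(e)|^2\, \lambda(de)\, ds < \infty$, the $U^\epsilon$-martingale contribution and the complementary residue both tend to $0$ in $L^2$. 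Sending $N \to \infty$ removes the localisation and yields the identity on all of $[0,T]$, which in differential form is exactly the statement of the lemma.
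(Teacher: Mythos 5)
The paper does not actually prove this lemma: it is recalled from \cite{OS} and used as a black box, so your argument is not competing with an in-paper proof but reconstructing the standard one. Your interlacing/truncation argument is exactly the classical route (Ikeda--Watanabe, Applebaum, \O ksendal--Sulem): isolate the a.s.\ finitely many marks with $|e|>\epsilon$, apply the continuous Ito formula between the corresponding jump times, add the exact jump increments of $\varphi$ at those times, use $\mu(de,dt)=p(de,dt)-\lambda(de)\,dt$ to shuttle the first-order jump contribution between $p$ and $\mu$ (simultaneously restoring $f$ from $f^\epsilon$), and pass $\epsilon\to 0$ on the small-mark part using the second-order Taylor remainder and the $L^2(\lambda)$ integrability of $U$. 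This is correct in outline and is, for practical purposes, the proof that the cited reference gives.

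Two points should be tightened. First, $U^\epsilon=\{0<|e|\le\epsilon\}$ is a truncation in the \emph{mark} variable, not in the jump \emph{amplitude}: since $U_s(\cdot)$ is a general $L^2(\lambda)$ integrand, $|e|\le\epsilon$ gives no pointwise control on $|U_s(e)|$, so the segment $Y_{s^-}+\theta U_s(e)$, $\theta\in[0,1]$, need not stay inside the compact set on which you bound $D^2\varphi$. Your estimate $|\varphi(y+u)-\varphi(y)-\langle\nabla\varphi(y),u\rangle|\le c|u|^2$ therefore requires either a global (or linear-growth) bound on $D^2\varphi$ --- which does hold for the squared-distance functions to which the lemma is applied in this paper, and which is in effect needed anyway for the last integral in the statement to be finite for a general $C^{1,2}$ function --- or an additional truncation of the integrand $U_s(e)$ itself, removed by a further limit. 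Second, your intermediate formula evaluates the drift, diffusion and Hessian terms at $Y^\epsilon_{s^-}$ rather than at $Y_{s^-}$; the convergence of these terms as $\epsilon\to0$ (via $Y^\epsilon\to Y$ uniformly in probability, continuity of $\varphi$ and its derivatives, and dominated convergence under your localization) is routine but is part of the argument and should be stated.
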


Based on the smooth domain $D_\e$ we define the function $\varphi_\epsilon(t,y):= (d(y,D_{\epsilon,t}))^2 = |y-\pi_\epsilon(t,y)|^2$ to which Ito's formula needs to be applied in the proof of Theorem \ref{maint1},. Note that although $D_\e$ is a smooth domain, the second (spatial) derivative of $\varphi_\e$ is not continuous at the boundary of $D_{\e,t}$ and thus Lemma \ref{lemmauu2+} is not directly applicable. To enable the use of Ito's formula we therefore proceed along the lines of \cite{LS}, see also \cite{GP}, and extend our distance function $\varphi_\e$ across the boundary and into the domain $D_{\e,t}$. Indeed, since $\partial D_{\e,t}$ is smooth there exists a neighbourhood $V_{\e,t}$ of $\partial D_{\e,t}$ such that, for $y \in D_{\e,t} \cap V_{\e,t}$, there exists a unique pair $(x, s) \in \partial D_{\e,t} \times \R^+$ such that $y=x+s\gamma$, where $\gamma \in N^1_{\e,t}(x)$. Recall that $N^1_{\e,t}(x)$, the cone of unit inward normal vectors to $D_{\e,t}$, at $x \in \partial D_{\e,t}$, contains only a single vector.
By the convexity of $D_{\epsilon,t}$ we also have
$$y=x+s \gamma, \ \mbox{ for } x=\pi_\epsilon(t,y),\ s=-d(y, D_{\epsilon,t}), \ \gamma \in N^1_{\e,t}(\pi_\e(t,x)),$$
whenever $y\in\mathbb R^d\setminus\overline{D_{\epsilon,t}}$. Hence, for $t \in [0,T]$ fixed, we can define a smooth map $\phi_\epsilon : \R^d \to\R$ such that
\begin{eqnarray*}
\phi_\epsilon(t,y)&=&s\quad \mbox{ when } y\in (\mathbb R^d\setminus\overline{D_{\epsilon,t}})\cup (
V_{\epsilon,t}\cap \overline{D_{\epsilon,t}}),\notag\\
\phi_\epsilon(t,y)&>&0\quad \mbox{ otherwise. }
\end{eqnarray*}
Using such a function $\phi_\epsilon$ we have
\noindent \begin{eqnarray*}
D_\epsilon&=&\{(t,y):\ t\in [0,T], y\in\mathbb R^d,\ \phi_\epsilon(t,y)>0\},\notag\\
\partial D_{\epsilon,t}&=&\{y\in\mathbb R^d,\ \phi_\epsilon(t,y)=0\},\mbox{ for }t\in [0,T],\notag\\
(\mathbb R^d\times [0,T])\setminus\overline{D_\epsilon}&=&\{(t,y):\ t\in [0,T], y\in\mathbb R^d,\ \phi_\epsilon(t,y)<0\}.
\end{eqnarray*}
Note that $\phi_\e(t,y)$ is smooth also across the boundary of $D_{\e,t}$ and that $\varphi_\e(t,y)=(\phi_\e(t,y)^-)^2$. Following \cite{GP}, we can now take an approximating sequence of smooth functions $\{g_n\}_{n\geq0}$, tending to $g(x)=(x^-)^2$ as $n \to \infty$,
and construct a sequence of smooth functions $\{\varphi^n_\e(t,y)=g_n(\phi_\e(t,y))\}_{n\geq 0}$ such that Ito's formula can be applied to $\varphi^n_\e$ for every $n\geq 0$ and such that
$\varphi^n_\e(t,y)$, $\frac{\partial}{\partial t}\varphi^n_\e(t,y)$, $\frac{\partial}{\partial y_i}\varphi^n_\e(t,y)$, $\frac{\partial^2}{\partial y_iy_j}\varphi^n_\e(t,y)$ tend to $\varphi_\e(t,y)$, $\frac{\partial}{\partial t}\varphi_\e(t,y)$, $\frac{\partial}{\partial y_i}\varphi_\e(t,y)$, $\frac{\partial^2}{\partial y_iy_j}\varphi_\e(t,y)$, respectively, as $n \to \infty$. Having such an approximation in mind, we will from here on in slightly abuse notation and apply the Ito formula directly to $\varphi_\e(t,y)$.

Finally, the following lemma is the result of the geometric assumptions on $D$ that we will use in the context of Ito's formula.
\begin{lemma} \label{lemmauu2ny} Let $D$ and $D_{\epsilon}$ be as in the statement of Lemma \ref{lemmauu1} and let $\varphi_\e(t,y)$ be defined as
$$\varphi_\epsilon(t,y):= (d(y,D_{\epsilon,t}))^2 = |y-\pi_\epsilon(t,y)|^2,\ t\in [0,T],\ y\in\mathbb R^d.$$
Then
\begin{eqnarray} \label{lemmauu2+-}
&(i) &\partial_t\varphi_\epsilon(t,y)\geq 0, \notag \\ 
&(ii) &\partial^2_{y_iy_j}\varphi_\epsilon(t,y)\xi_i\xi_j\geq 0,
\end{eqnarray}
whenever $t\in [0,T],\ y, \xi \in\mathbb R^d$, and
\begin{equation} \label{lemmauu2+-+}
\varphi_{\e}(t,y+z) - \varphi_{\e}(t,y) - \langle \nabla \varphi_{\e}(t,y) ,z \rangle\geq 0
\end{equation}
whenever $t\in [0,T],\ y,z\in\mathbb R^d$.
\end{lemma}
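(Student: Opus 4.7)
The plan is to deduce the three inequalities from the two structural features inherited by the smooth approximation $D_\epsilon$: the monotonicity $D_{\epsilon,t'}\subseteq D_{\epsilon,t}$ for $t'\geq t$ and the convexity of each time slice $D_{\epsilon,t}$, both of which are guaranteed by Lemma \ref{lemmauu1-}. I would work with $\varphi_\epsilon(t,y)=d(y,D_{\epsilon,t})^2=|y-\pi_\epsilon(t,y)|^2$ directly, noting that the pointwise second-derivative statement should be read for the smooth mollifications $\varphi^n_\epsilon=g_n\circ\phi_\epsilon$ introduced above the statement of the lemma; the claimed inequalities are inherited under the limit $n\to\infty$.

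For (i), I would fix $y\in\mathbb R^d$ and compare $\varphi_\epsilon$ at two different times $t_1<t_2$. The inclusion $D_{\epsilon,t_2}\subseteq D_{\epsilon,t_1}$, combined with the fact that the Euclidean distance from a fixed point to a set is monotone decreasing with respect to set inclusion, gives $d(y,D_{\epsilon,t_2})\geq d(y,D_{\epsilon,t_1})$. Squaring preserves this ordering, so $t\mapsto\varphi_\epsilon(t,y)$ is nondecreasing on $[0,T]$ and $\partial_t\varphi_\epsilon(t,y)\geq 0$. In terms of the smooth approximants, the same monotonicity translates into $\phi_\epsilon(\cdot,y)$ being nonincreasing in $t$ near the boundary, while $g_n$ is nonincreasing, so $\partial_t\varphi^n_\epsilon=g_n'(\phi_\epsilon)\,\partial_t\phi_\epsilon\geq 0$ and the claim passes to the limit.

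For (ii) and the secant inequality \eqref{lemmauu2+-+}, the key fact is that, for each fixed $t$, the map $y\mapsto\varphi_\epsilon(t,y)$ is convex on $\mathbb R^d$. Convexity of $d(\cdot,D_{\epsilon,t})$ is classical: for $y_1,y_2\in\mathbb R^d$, $x_1,x_2\in D_{\epsilon,t}$ and $\lambda\in[0,1]$, one estimates $d(\lambda y_1+(1-\lambda)y_2,D_{\epsilon,t})\leq|\lambda(y_1-x_1)+(1-\lambda)(y_2-x_2)|\leq\lambda|y_1-x_1|+(1-\lambda)|y_2-x_2|$, and taking the infimum over $x_i\in D_{\epsilon,t}$ yields convexity; squaring a nonnegative convex function preserves convexity, so $\varphi_\epsilon(t,\cdot)$ is convex. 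Consequently the Hessian inequality (ii) is nothing but the second-order characterization of convexity of the smooth approximants $\varphi^n_\epsilon$, while \eqref{lemmauu2+-+} is its first-order (supporting-hyperplane) version, and both survive the passage $n\to\infty$.

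The main subtlety, and the reason for introducing the mollifications in the first place, is that $\varphi_\epsilon$ is only $C^{1,1}$ across $\partial D_{\epsilon,t}$, so the pointwise statements in (ii) and \eqref{lemmauu2+-+} have to be interpreted via the family $\{\varphi^n_\epsilon\}_{n\geq 0}$. Convexity in $y$ of each $\varphi^n_\epsilon$ is transparent from the composition structure: the signed distance $\phi_\epsilon(t,\cdot)$ is concave for the convex set $D_{\epsilon,t}$, and $g_n$ is convex and nonincreasing, so $g_n\circ\phi_\epsilon$ is convex (concave composed with convex nonincreasing is convex). The three claimed inequalities then hold pointwise for $\varphi^n_\epsilon$ by the arguments above and pass to the limit, either pointwise or in the weaker sense needed to drive the subsequent application of Itô's formula in Lemma \ref{lemmauu2+}.
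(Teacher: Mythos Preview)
Your proposal is correct and follows essentially the same route as the paper: (i) from the time-monotonicity \eqref{timedep+2} of the slices, (ii) from convexity of $D_{\epsilon,t}$, and \eqref{lemmauu2+-+} from (ii) via the first-order characterization of convexity (the paper phrases this last step as ``Taylor's formula and (ii)''). Your treatment is considerably more detailed than the paper's three-line proof---in particular, your composition argument that $g_n$ convex nonincreasing composed with the concave signed distance $\phi_\epsilon$ yields convex $\varphi^n_\epsilon$ makes explicit what the paper leaves implicit in its remark about ``slightly abusing notation'' with the approximants.
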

\begin{proof}
\eqref{lemmauu2+-} $(i)$ follows from \eqref{timedep+2} and \eqref{lemmauu2+-} $(ii)$ follows from the convexity of $D_{\e,t}$. Finally, Taylors formula and \eqref{lemmauu2+-} $(ii)$ yields \eqref{lemmauu2+-+}.
\end{proof}

\section{Estimates for approximating problems: technical lemmas} \label{sec:lemmas}
\noindent
To prove the existence part of Theorem \ref{maint1} we use the method of penalization. Indeed, for each $n\geq 1$, we construct  a  quadruple $(Y_t^n, Z_t^n, U_t ^n, \Lambda_t^n)$ through penalization and we prove that $(Y_t^n, Z_t^n, U_t ^n, \Lambda_t^n)$ converges, as $n\to \infty$, to a solution $(Y_t, Z_t, U_ t, \Lambda_t)$ of the reflected backward stochastic differential equation, with reflection in the inward spatial normal direction, in $D$, and with data $(\xi,f),$ as defined in Definition \ref{rbsde}. Uniqueness is then proved by an argument based on Ito's formula. In the following we let $(\xi,f)$ be as in \eqref{data}-\eqref{data++}, and we let $D\subset\mathbb{R}^{d+1}$ be a time-dependent domain satisfying \eqref{timedep+}~-~\eqref{timedep+2}. Furthermore, we let $c$ denote a generic constant which may change value from line to line.
\subsection{Construction of $(Y_t^n, Z_t^n, U_t^n, \Lambda_t^n)$}
Let, for any $n\in\mathbb Z_+$,
\begin{eqnarray}\label{ffa1}
f_n(t,y,z,u):=f(t,y,z,u)-n(y-\pi(t,y)).
\end{eqnarray}
Then, for $n$ fixed, $f_n$ satisfies \eqref{data++} since $\pi$ has the Lipschitz property in space, see Lemma \ref{lemmaa} $(iii)$. Hence, using results concerning existence and uniqueness for (unconstrained) BSDEs driven by Wiener-Poisson type processes, see Lemma 2.4 in \cite{TL}, we can conclude that there exist, for each $n\in\mathbb Z_+$, a unique
triple $(Y_t^n, Z_t^n, U_t^n)$ and a constant $c_n$, $1\leq c_n<\infty$, such that

\begin{eqnarray}\label{ffa}(i)&&E \left [\sup_{0 \leq t \leq T} |Y_t^n|^2 \right] \leq c_n,\notag\\
(ii)&& E \left [\int_0 ^T \|Z^n_t
\|^2 dt +  \int _0^T \int _U|U^n_s(e)|^2 \lambda(de)ds \right] < \infty,
\notag\\
(iii)&&Y^n_t= \xi + \int_t ^T f_n(s,Y^n_s,Z^n_s,U^n_s) ds\notag\\
 &&- \int_t ^T Z^n_s dW_s- \int_t^T \int _U U^n_s(e) \mu(de,ds).
\end{eqnarray}
Note also that from \cite{TL} we have $Y^n \in \mathcal{D}\left( \left[ 0,T\right] , \R^{d}\right) $. Given $(Y_t^n, Z_t^n, U_t ^n)$ we define, for $n\in\mathbb Z_+$,  the process $\Lambda_t^n$ through
\begin{equation}\label{ffa3}
 \Lambda_t ^n =-n \int_0^t ( Y_s ^n-\pi(s,Y_s ^n)) ds.
\end{equation}
Note that
\begin{equation*}
 \Lambda_t ^n =\int_0^t \frac{-( Y_s ^n-\pi(s,Y_s ^n))}{| Y_s ^n-\pi(s,Y_s ^n)|}d|\Lambda^n|_s
\end{equation*}
and that $-( Y_s ^n-\pi(s,Y_s ^n))/{| Y_s ^n-\pi(s,Y_s ^n)|}$ is an element in the inwards directed normal cone to $D_s$ at
$\pi(s,Y_s ^n)\in\partial D_s$. Furthermore, using \eqref{ffa1} and \eqref{ffa3} we see that \eqref{ffa} $(iii)$ can be rewritten as
\begin{eqnarray*}
 Y_t ^n&=& \xi +\int_t ^T f(s, Y_s ^n, Z_s ^n, U_s ^n) ds +\Lambda^n_T-\Lambda_t^n\notag\\
 &&-\int _t ^T Z_s^n dW_s - \int _t^T \int _U U_s^n(e) \mu(de,ds),
\end{eqnarray*}
for all $t\in [0,T]$. Recall that $Y^n, \Lambda^n, U^n$, $W_t$ and  $Z^n$ are multi-dimensional processes. In particular, $Y_t^n=(Y_t ^{1,n}, \dots,Y_t ^{d,n})$, $\Lambda_t^n=(\Lambda_t ^{1,n},\dots,\Lambda_t ^{d,n})$, $U_t^n=(U_t ^{1,n},\dots, U_t ^{d,n})$, $W_t=(W_t^1,\dots,W_t^n)$ and $ Z_t^n$ is a $d\times n$-matrix with entries $Z_t^{i,j,n}$ and columns $Z_t^{j,n}$.
\subsection{A priori estimates for $(Y_t^n, Z_t^n, U_t^n, \Lambda_t^n)$}
\begin{lemma}\label{ll1} There exists a constant $c$, $1\leq c<\infty$, independent of $n$, such that
\begin{eqnarray*}
(i)&&E  \left [\sup_{0 \leq t \leq T}|Y_t^n|^2 \right ] \leq  c,\notag\\
(ii)&&E \left [\int _t ^T \|Z_s^n\|^2  + \int _t^T \int _U |U_s^n(e)|^2 \lambda(de)ds \right ]\leq c,\ t\in[0,T],\\
(iii)&& E \left [n\int _t ^T|Y_s^n-\pi(s,Y_s ^n)|ds \right ]\leq c, \ t\in[0,T].
\end{eqnarray*}
\end{lemma}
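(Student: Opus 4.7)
The plan is to apply the It\^{o} formula of Lemma \ref{lemmauu2+} to $|Y_t^n - P_T|^2$, where $P_T \in D_T$ is the interior point furnished by Lemma \ref{lemmaa} $(iv)$, and then to exploit the geometric estimate $\langle Y_s^n - P_T, Y_s^n - \pi(s,Y_s^n) \rangle \geq \gamma^{-1} |Y_s^n - \pi(s,Y_s^n)|$ in order to dominate the penalization term. Using the dynamics
\[
dY_s^n = -f(s,Y_s^n,Z_s^n,U_s^n)\,ds - d\Lambda_s^n + Z_s^n\,dW_s + \int_U U_s^n(e)\,\mu(de,ds),
\]
together with $d\Lambda_s^n = -n(Y_s^n-\pi(s,Y_s^n))\,ds$, the It\^{o} expansion (integrated from $t$ to $T$) should produce
\begin{align*}
|Y_t^n - P_T|^2 &+ \int_t^T \|Z_s^n\|^2\,ds + \int_t^T\!\!\int_U |U_s^n(e)|^2 \lambda(de)\,ds \\
&+ 2n \int_t^T \langle Y_s^n - P_T,\, Y_s^n - \pi(s,Y_s^n)\rangle\,ds \\
&= |\xi - P_T|^2 + 2\int_t^T \langle Y_s^n - P_T,\, f(s,Y_s^n,Z_s^n,U_s^n)\rangle\,ds - M_t^T,
\end{align*}
where $M_t^T$ collects the Wiener and Poisson martingale increments between $t$ and $T$.

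The next step is to bound the right-hand side. By the Lipschitz assumption \eqref{data++} $(iii)$ and Young's inequality, for any $\mu>0$ one has
\[
2\langle Y_s^n - P_T, f(s,Y_s^n,Z_s^n,U_s^n)\rangle \leq \mu |Y_s^n-P_T|^2 + \tfrac{C}{\mu}\bigl(|f(s,0,0,0)|^2 + |Y_s^n|^2 + \|Z_s^n\|^2 + \|U_s^n\|^2\bigr).
\]
Choosing $\mu$ large so that $C/\mu \leq 1/2$ allows us to absorb half of the $\|Z_s^n\|^2$ and $\|U_s^n\|^2$ contributions into the left-hand side. Meanwhile, Lemma \ref{lemmaa} $(iv)$ converts the penalization term into the nonnegative lower bound $2n\gamma^{-1}\int_t^T |Y_s^n-\pi(s,Y_s^n)|\,ds$. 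Taking expectations kills $M_t^T$, and we arrive at an inequality of the form
\[
E|Y_t^n - P_T|^2 + \tfrac{1}{2} E\!\!\int_t^T\!\!\Bigl(\|Z_s^n\|^2 + \|U_s^n\|^2\Bigr)ds + 2n\gamma^{-1} E\!\!\int_t^T\!\! |Y_s^n - \pi(s,Y_s^n)|\,ds \leq C + C\!\!\int_t^T\!\! E|Y_s^n|^2\,ds.
\]
A backward Gronwall argument then yields $\sup_{t \in [0,T]} E|Y_t^n|^2 \leq c$, independent of $n$, from which $(ii)$ and $(iii)$ follow at once by feeding the bound back into the previous inequality.

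The sup-in-$t$ estimate $(i)$ requires one additional step: return to the It\^{o} expansion before taking expectations, take the supremum over $t$ on both sides, and control the martingale piece $M_t^T$ by the Burkholder–Davis–Gundy inequality. This produces
\[
E\sup_{0\leq t\leq T}|M_t^T| \leq C\, E\!\left(\int_0^T |Y_s^n-P_T|^2 \|Z_s^n\|^2 ds + \int_0^T\!\!\int_U |Y_s^n-P_T|^2|U_s^n(e)|^2\lambda(de)ds\right)^{\!1/2},
\]
which, via the standard splitting $2ab \leq \tfrac{1}{2}a^2 + 2b^2$, gets absorbed into $\tfrac{1}{2}E\sup_t|Y_t^n-P_T|^2$ plus a constant times the already-controlled $Z^n$ and $U^n$ norms. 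Rearranging yields $(i)$.

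The main obstacle is to make the geometric input of Lemma \ref{lemmaa} $(iv)$ cooperate with the quadratic structure so that the penalty term can be simultaneously used to (a) generate the bound $(iii)$ and (b) not leak into the estimates for $|Y^n|$, $Z^n$, $U^n$. The convexity (to ensure the projection exists and is $1$-Lipschitz) together with the existence of a deep interior point $P_T \in D_T$ is precisely what makes $\langle Y_s^n - P_T, Y_s^n - \pi(s,Y_s^n)\rangle$ both nonnegative and coercive on $|Y_s^n - \pi(s,Y_s^n)|$, and hence the choice of expansion point $P_T$ (rather than $0$ or $\xi$) is essential.
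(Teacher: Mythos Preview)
Your proposal is correct and follows essentially the same route as the paper: apply It\^o's formula to $|Y_t^n-P_T|^2$ with $P_T$ from Lemma \ref{lemmaa} $(iv)$, use that lemma to turn the penalty term into the coercive lower bound $2n\gamma^{-1}\int|Y_s^n-\pi(s,Y_s^n)|\,ds$, absorb the $Z^n,U^n$ contributions via Lipschitz and Young, apply Gronwall for $(ii)$ and $(iii)$, and then upgrade to $(i)$ via Burkholder--Davis--Gundy. One minor point: in the pathwise It\^o identity the jump quadratic variation should carry $p(de,ds)$ rather than $\lambda(de)\,ds$ (they agree only in expectation), but since that term is nonnegative and is simply discarded in the $\sup$ step, this does not affect your argument.
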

\begin{proof} Let $P_T\in D_T$ be as in Lemma \ref{lemmaa} $(iv)$. Applying Ito's formula to the process $|Y_t ^n-P_T|^2$ we deduce, for $t\in [0,T]$, that
\begin{eqnarray}\label{jaj}
&&|Y_t ^n-P_T|^2 + \int _t ^T 
\|Z_s^n\|^2 ds + \int _t ^T \int _U |U_s^n(e)|^2 p(de, ds) \notag \\
&=&|\xi-P_T|^2 +  2 \int _t ^T \langle Y_s ^n-P_T, f(s,Y_s ^n,Z_s ^n,U_s^n)\rangle ds  - 2 \int _t ^T \langle Y_s ^n-P_T, Z_s^n dW_s \rangle\notag \\
 &&- 2 \int _t ^T \int _U  \langle Y_s ^n-P_T, U^n_s(e)  \rangle \mu(de,ds) - 2 n \int _t ^T  \langle Y_s ^n-P_T,Y_s^n-\pi(s,Y_s ^n)\rangle ds.
\end{eqnarray}
 Let
\begin{eqnarray*}
A_n:=|Y_t ^n-P_T|^2 + \int _t ^T \|Z_s^n\|^2 ds + \int _t ^T \int _U |U_s^n(e)|^2 p(de, ds).
\end{eqnarray*}
Rearranging \eqref{jaj}, we find that
\begin{eqnarray*}
&&A_n + 2 n \int _t ^T  \langle Y_s ^n-P_T,Y_s^n-\pi(s,Y_s ^n)\rangle ds \notag\\
&=& |\xi-P_T|^2 +  2 \int _t ^T \langle Y_s ^n-P_T, f(s,Y_s ^n,Z_s ^n,U_s^n)\rangle ds\notag\\
 &&- 2 \int _t ^T \langle Y_s ^n-P_T, Z_s^n dW_s\rangle - 2 \int _t ^T \int _U \langle Y_s^n-P_T, U^n_s(e)  \rangle \mu(de,ds).
\end{eqnarray*}
Furthermore, using Lemma \ref{lemmaa} $(iv)$ we see that
\begin{eqnarray}\label{jaj++ny}
&&A_n + 2 \gamma^{-1} n \int _t ^T |Y_s^n-\pi(s,Y_s ^n)|ds \notag\\
&\leq& |\xi-P_T|^2 +  2 \int _t ^T \langle Y_s ^n-P_T, f(s,Y_s ^n,Z_s ^n,U_s^n)\rangle ds\notag\\
 &&- 2 \int _t ^T \langle Y_s ^n-P_T, Z_s^n dW_s\rangle+ 2 \int _t ^T \int _U \langle Y_s^n-P_T, U^n_s(e)  \rangle \mu(de,ds).
\end{eqnarray}
Next, taking expectations in \eqref{jaj++ny} and using the fact that $\mu$ is a martingale measure, we can conclude that
\begin{eqnarray}\label{eq:inequal1}
&&E  \left [A_n+2 \gamma^{-1} n \int _t ^T |Y_s^n-\pi(s,Y_s ^n)|ds \right ]   \leq I_{t,T}
\end{eqnarray}
where \begin{eqnarray}\label{jaj++1}
I_{t,T}=E \left [ |\xi-P_T|^2 \right] + 2 E\left [\int _t ^T \langle Y_s ^n-P_T, f(s,Y_s ^n,Z_s ^n, U_s^n)\rangle ds\right].
\end{eqnarray}
Using the Lipschitz character of $f$, \eqref{data++} $(iii)$, and the inequality $ab \leq \eta a^2 + \frac{b^2}{4\eta}$ it follows that  we can estimate $I_{t,T}$ as,
\begin{eqnarray}\label{jaj++1+}
I_{t,T}&\leq& c\left( 1+ E \left[\int _t^T \left( |f(s,P_T,0,0)|^2+(1+2\eta)|Y^n_s-P_T|^2 ds \right ) \right] \right)\notag \\
&+& c E\left[ \int _t ^T \frac{1}{\eta} \left (\|Z_s^n\|^2 + \int _U |U_s^n(e)|^2 \lambda(de) \right) ds\right ]
\end{eqnarray}
where $\eta>0$ is a degree of freedom and $c$, $1\leq c<\infty$, is a constant depending on $\xi$, $f$ and $\mbox{diam($D_T$)}$. If we let $\eta$ be such that $c/\eta \leq1/2$, it follows from \eqref{eq:inequal1} and \eqref{jaj++1+}, after recalling the definition of $A_n$, that
\begin{eqnarray}\label{jaj++ha}
&&E[|Y_t ^n-P_T|^2]+ \frac 1 2 E \left [\int _t ^T \|Z_s^n\|^2 ds\right ] + \frac{1}{2}E \left [\int _t ^T \int _U |U_s^n(e)|^2 \lambda(de) ds \right ] \notag \\
&\leq& c \left( 1+ E \left [\int _t^T |Y^n_s-P_T|^2 ds \right ]\right).
\end{eqnarray}
Using \eqref{jaj++ha} and Gronwall's lemma we deduce that
\begin{equation}\label{ffa2--}
\sup_{0 \leq t \leq T}E[|Y_t^n-P_T|^2] \leq \tilde c\exp(\tilde c T)
\end{equation}
where $\tilde c$ is independent of $n$. In particular, we can conclude that there exists $\hat c$, $1\leq\hat c<\infty$, independent of $n$, such that
\begin{eqnarray}\label{ffa2--+}
(i)&&\sup_{0 \leq t \leq T}E[|Y_t^n|^2] \leq \hat c, \quad \mbox {and}\notag\\
(ii)&&\ E \left [\int _0 ^T \|Z_s^n\|^2 ds + \int _0 ^T\int _U |U_s^n(e)|^2 \lambda(de) ds \right]\leq\hat c.
\end{eqnarray}
We next prove that there exists $\check c$, $1\leq \check c<\infty$, independent of $n$, such that
\begin{eqnarray}\label{ffa2--++}
E[\sup_{0 \leq t \leq T}|Y_t^n|^2] \leq \check c.
\end{eqnarray}
To do this we first note, using \eqref{jaj++ny} and the Lipschitz character of
$f$, that, for a constant $c$, $1\leq c<\infty$,
\begin{eqnarray}\label{eq:BDGdoob}
&&c^{-1}|Y_t^n-P_T|^2\leq |\xi-P_T|^2+\biggl |\int _t ^T \langle Y_s ^n-P_T, f(s,P_T,0,0)\rangle ds\biggr |\notag\\
&&+\biggl |\int _t ^T |Y_s ^n-P_T|(|Y_s ^n-P_T|+\|Z_s ^n\|+\|U_s^n\|_2)ds\biggr |\notag\\
&&+\biggl |\int _t ^T \langle Y_s ^n-P_T, Z_s^n dW_s\rangle\biggr |+ \biggl | \int _t ^T \int _U \langle Y_s^n-P_T, U^n_s(e) \rangle \mu(de,ds)\biggr |.
\end{eqnarray}
We treat the last two terms on the right hand side of \eqref{eq:BDGdoob} using H\"{o}lders inequality and the Burkholder-Davis-Gundy inequality. Indeed, applying these yields 
\begin{eqnarray}\label{jaj++nyhha}
&&E\left [\sup_{0 \leq t \leq T}\left |\int _t ^T \langle Y_s ^n-P_T, Z_s^n dW_s\rangle\right |\right]\notag\\
&&\leq \biggl (E \left [\sup_{0 \leq t \leq T}|Y_t ^n-P_T|^2\right ]\biggr )^{1/2}\biggl (E\biggl[\sup_{0 \leq t \leq T}\biggl |\int _t ^T Z_s^n dW_s \biggr |^2
\biggr ]\biggr )^{1/2}\notag\\
&&\leq c\biggl (E \left [\sup_{0 \leq t \leq T}|Y_t ^n-P_T|^2 \right ]\biggr )^{1/2}\biggl (E \left [\int _0 ^T \|Z_s^n\|^2 ds\right ]\biggr )^{1/2}.
\end{eqnarray}
Similarly,
\begin{eqnarray*}
&&E\left [\sup_{0 \leq t \leq T}\biggl |\int _t ^T \int _U \langle Y_s^n-P_T, U^n_s(e)  \rangle \mu(de,ds)\biggr |\right]\notag\\
&&\leq c\biggl (E \left [\sup_{0 \leq t \leq T}|Y_t ^n-P_T|^2 \right ]\biggr )^{1/2}\biggl (E \left [\int _0 ^T\int _U |U_s^n(e)|^2 \lambda(de) ds\right ]\biggr )^{1/2}.
\end{eqnarray*}
Using the above estimates as well as  \eqref{ffa2--}, \eqref{ffa2--+} and assumption \eqref{timedep+} we get after taking expectation in \eqref{eq:BDGdoob} that 
\begin{eqnarray*}
&&E  \left [ \sup_{0\leq t \leq T}|Y_t^n-P_T|^2 \right ] \leq c \biggl (1+ \left ( E [\sup_{0 \leq t \leq T}|Y_t ^n-P_T|^2  ] \right )^{1/2} \biggr)
\end{eqnarray*}
from which we conclude that $E \left [ \sup_{0\leq t \leq T}|Y_t^n-P_T|^2 \right] \leq c$ for some constant $1 \leq c < \infty$ and, consequently, that \eqref{ffa2--++} holds.
Finally, starting from \eqref{jaj++ny} and repeating the arguments above we also deduce that
\begin{eqnarray*}
E \left [n\int _t ^T|Y_s^n-\pi(s,Y_s ^n)|ds \right ]\leq\hat c
\end{eqnarray*}
for some constant $\hat c$ independent of $n$. This completes the proof of Lemma \ref{ll1}.\end{proof}

\subsection{Uniform control of $d(Y_t^n,D_t)$}
We here prove the following lemma.

\begin{lemma}\label{lemmbb} Let $D\subset\mathbb{R}^{d+1}$ be a time-dependent domain satisfying \eqref{timedep+}~-~\eqref{timedep+2}. Let, for $\epsilon>0$ small,  $D_{\epsilon}$
be as in Lemma \ref{lemmauu1-}. Then there exist $\epsilon_0>0$ and $c$, $1\leq c<\infty$, both independent of $n$, such that
\begin{eqnarray*}
(i)&&E \left [\sup{_{0\leq t\leq T}d(Y_t^n,D_{\e,t})^2} \right ] \leq c\biggl ( \frac 1 n +\epsilon+n\epsilon^2\biggr ),\notag\\
(ii)&&E \left [\int_0^T(d(Y_t^n,D_{\e,t}))^2dt \right ] \leq c\biggl ( \frac 1 {n^2} +\frac \epsilon n+\epsilon^2\biggr ),
\end{eqnarray*}
whenever $0<\epsilon<\epsilon_0$ and $n\geq 1$.
\end{lemma}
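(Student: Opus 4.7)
The natural candidate is Ito's formula (Lemma \ref{lemmauu2+}) applied to $\varphi_\e(t,Y_t^n):=d(Y_t^n,D_{\e,t})^2$, with $Y^n$ satisfying the penalized BSDE of Subsection 4.1, so that
\begin{equation*}
dY_s^n = \bigl[-f(s,Y_s^n,Z_s^n,U_s^n) + n(Y_s^n - \pi(s,Y_s^n))\bigr]\, ds + Z_s^n\, dW_s + \int_U U_s^n(e)\, \mu(de,ds).
\end{equation*}
Integrating the resulting identity from $t$ to $T$ and invoking the sign information from Lemma \ref{lemmauu2ny} (positivity of $\partial_t \varphi_\e$, nonnegativity of the Hessian term by convexity, and nonnegativity of the $\lambda$-integrated jump correction by \eqref{lemmauu2+-+}), all three of these contributions can be moved to the left-hand side, leaving a pathwise inequality of the form
\begin{equation*}
\varphi_\e(t,Y_t^n) \le \varphi_\e(T,Y_T^n) + \int_t^T \langle \nabla \varphi_\e, f \rangle\, ds - n\int_t^T \langle \nabla \varphi_\e, Y_s^n-\pi(s,Y_s^n)\rangle\, ds - M_t^T,
\end{equation*}
where $M_t^T$ is the compensated square-integrable martingale built from $Z_s^n\, dW_s$ and $\mu(de,ds)$.

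The crucial step is the lower bound on the penalization term: since $\nabla \varphi_\e(s,y)=2(y-\pi_\e(s,y))$, the goal is to show
\begin{equation*}
\langle \nabla \varphi_\e(s,Y_s^n),\, Y_s^n-\pi(s,Y_s^n)\rangle \ge \varphi_\e(s,Y_s^n) - \e^2.
\end{equation*}
Splitting $Y_s^n-\pi = (Y_s^n-\pi_\e) + (\pi_\e-\pi)$, the first product contributes $\varphi_\e$ exactly. For the cross term, one must handle the fact that $\pi(s,Y_s^n)\in\overline{D_s}$ is not a priori in $\overline{D_{\e,s}}$, so we introduce $\tilde\pi\in\overline{D_{\e,s}}$ with $|\tilde\pi - \pi(s,Y_s^n)|\le \e$ (its existence is secured by the Hausdorff bound of Lemma \ref{lemmauu1-}) and apply the convex projection inequality $\langle Y_s^n-\pi_\e,\,\tilde\pi-\pi_\e\rangle\le 0$; the remaining error $\e\sqrt{\varphi_\e}$ is absorbed by Young's inequality. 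The terminal value is bounded analogously: since $\xi\in D_T$ a.s.\ and $h(D_T,D_{\e,T})\le\e$, we have $\varphi_\e(T,Y_T^n)=d(\xi,D_{\e,T})^2\le \e^2$.

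Applying Young's inequality once more to the driver term, $|\langle \nabla \varphi_\e, f\rangle|\le \tfrac{n}{2}\varphi_\e + \tfrac{2}{n}|f|^2$, lets us absorb half of the $n\varphi_\e$ gain into the left-hand side, producing
\begin{equation*}
\varphi_\e(t,Y_t^n) + \tfrac{n}{2}\int_t^T \varphi_\e(s,Y_s^n)\, ds \le \e^2 + \tfrac{2}{n}\int_t^T |f(s,Y_s^n,Z_s^n,U_s^n)|^2\, ds + nT\e^2 - M_t^T.
\end{equation*}
Taking expectations (the martingale vanishes) and using the Lipschitz hypothesis \eqref{data++} together with Lemma \ref{ll1} to obtain $E[\int_0^T |f|^2\, ds]\le c$ uniformly in $n$, we get at $t=0$ the bound $E[\int_0^T \varphi_\e\, ds]\le c(1/n^2 + \e^2/n + \e^2)$, which proves (ii) because $\e^2/n\le \e/n$ for $\e\le 1$. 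Assertion (i) is then obtained by taking $\sup_t$ in the pathwise inequality above and controlling $E[\sup_t|M_t^T|]$ via the Burkholder–Davis–Gundy and Doob inequalities, exploiting the uniform energy estimate on $(Z^n,U^n)$ from Lemma \ref{ll1}(ii). The principal obstacle is the convexity/projection estimate in the second paragraph: the absence of an a priori inclusion $D_s\subseteq D_{\e,s}$ forces the detour through $\tilde\pi$ and the Hausdorff approximation of Lemma \ref{lemmauu1-}, and once this geometric step is in place the remainder is a standard penalization–plus–Ito calculation.
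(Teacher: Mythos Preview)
Your treatment of the penalization cross term via an auxiliary $\tilde\pi\in\overline{D_{\e,s}}$ and the Hausdorff bound of Lemma~\ref{lemmauu1-}, together with a single Young split on the driver, yields~(ii) correctly and is in fact tidier than the paper's route (which introduces the indicator $\chi_\e$, splits both $I_1$ and $I_3$ into near/far regimes, and invokes Lemmas~\ref{lemmauu1} and~\ref{lemmauu2}).

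The gap is in~(i). If you control the martingale via BDG using only the energy estimate of Lemma~\ref{ll1}(ii), the relevant step reads
\[
E\Bigl[\Bigl(\int_0^T |(Y_s^n-\pi_\e(s,Y_s^n))^\ast Z_s^n|^2\,ds\Bigr)^{1/2}\Bigr]
\le \eta\, E\bigl[\sup_t\varphi_\e(t,Y_t^n)\bigr] + \frac{1}{4\eta}\,E\Bigl[\int_0^T\|Z_s^n\|^2\,ds\Bigr],
\]
and the second term on the right is $O(1)$, not $O(1/n+\e+n\e^2)$. Absorbing the first term leaves $E[\sup_t\varphi_\e]\le c(1/n+\e+n\e^2)+c'$, i.e.\ only an $O(1)$ bound; the jump martingale behaves the same way.

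What the paper does instead is \emph{not} discard the Hessian and jump-correction integrals when passing to the inequality: being nonnegative, they are kept on the left. After taking expectations (and using $\varphi_\e(t,Y_t^n)\ge 0$), the same right-hand-side bound forces
\[
E\Bigl[\int_0^T \sum_{i,j}(Z_s^nZ_s^{n,\ast})_{ij}\,\partial^2_{y_iy_j}\varphi_\e(s,Y_s^n)\,ds\Bigr]\le c\Bigl(\frac{1}{n}+\e+n\e^2\Bigr),
\]
and similarly for the jump correction. Then the pointwise convexity relation
\[
\frac{|(Y_s^n-\pi_\e(s,Y_s^n))^\ast Z_s^n|^2}{|Y_s^n-\pi_\e(s,Y_s^n)|^2}\,\I_{\{Y_s^n\notin D_{\e,s}\}}
\le c\sum_{i,j}(Z_s^nZ_s^{n,\ast})_{ij}\,\partial^2_{y_iy_j}\varphi_\e(s,Y_s^n)
\]
allows one to replace $E[\int\|Z_s^n\|^2\,ds]$ in the BDG step by this small quantity, so the absorbed inequality closes at the rate $c(1/n+\e+n\e^2)$. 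Without this extra ingredient your argument for~(i) cannot deliver the stated bound.
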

\begin{proof} Let $\varphi_\epsilon(t,Y_t)=d(Y_t^n, D_{\e,t})^2=|Y_t^n-\pi_\epsilon(t,Y_t^n)|^2$. Then $\nabla_y\varphi_\epsilon(t,Y_t)=2(Y_t^n-\pi_\epsilon(t,Y_t^n))$. Using the
 Ito formula of Lemma \ref {lemmauu2+} we see that
\begin{eqnarray}\label{aaa-li}
&&\varphi_\e(t,Y_t^n) + \int_t^T (\partial_{s}\varphi_\epsilon)(s,Y_{s}^n)ds+ \frac{1}{2} \int_t^T  \sum_{i,j} (Z_s^{n} Z_s^{n,\ast})_{ij} \partial^2_{y_iy_j}\varphi_\epsilon(s, Y_{s}) ds \notag\\
&&+ \int _t ^T \int _U [\varphi_{\e}(s,Y_{s^-}^n+U^n_s(e)) - \varphi_{\e}(Y^n_{s^-}) - \langle \nabla \varphi_{\e}(s,Y^n_{s^-}) , U^n_s(e) \rangle ] p(de,ds) \notag \\
&&=\varphi_\epsilon(T,\xi)+I_1+I_2+I_3 + I_4,
\end{eqnarray}
where
\begin{eqnarray}\label{aaa+}
I_1&=&2\int _t ^T \langle Y_s^n-\pi_\epsilon(s,Y_s^n),f(s,Y_s,Z_s^n, U_s^n)\rangle ds,\notag\\
I_2&=& - 2\int _t ^T \langle Y_s^n-\pi_\epsilon(s,Y_s^n),Z^n_s
dW_s \rangle,\notag\\
I_3&=&- 2n \int_t ^T\langle Y_s^n-\pi_\epsilon(s,Y_s^n),Y_s^n-\pi(s,Y_s^n)\rangle ds, \notag \\
I_4&=&- 2 \int_t ^T \int _U \langle  Y_s^n-\pi_\epsilon(s,Y_s^n), U^n_s(e) \rangle \mu(de,ds).
\end{eqnarray}
Using Lemma \ref{lemmauu2ny} we see that
\begin{eqnarray*}
&&\int_t^T \partial_{s}\varphi_\epsilon(s,Y_s^n)ds+ \frac{1}{2} \int_t^T \sum_{i,j} (Z_s^{n} Z_s^{n,\ast})_{ij} \partial^2_{y_iy_j}\varphi_\epsilon(s, Y_s) ds \notag\\
&&+ \int _t ^T \int _U [\varphi_{\e}(s,Y_{s^-}^n+U^n_s(e)) - \varphi_{\e}(Y^n_{s^-} ) - \langle \nabla \varphi_{\e}(s,Y^n_{s^-}) , U^n_s(e) \rangle ] p(de,ds)\geq 0,
\end{eqnarray*}
and hence
\begin{eqnarray}\label{aaa}
\varphi_\e(t,Y_t^n)\leq \varphi_\epsilon(T,\xi)+I_1+I_2+I_3 + I_4.
\end{eqnarray}
Since $\xi \in D_T$ a.s. we see, using Lemma \ref{lemmauu1}, that  $\varphi _\e(T,\xi)\leq c \e^2$ a.s. To simplify the notation in what follows, we define $\chi_\epsilon (t,y) : [0,T]\times\mathbb R^d \to \{0,1\}$ as
\begin{equation*} 
\chi_\epsilon(t,y) =\begin{cases}1& \mbox{ if } d(y,D_{\epsilon,t})>\epsilon \\
 							0 & \mbox{otherwise}
\end{cases}.
\end{equation*}
We first focus on the term $I_1$ in \eqref{aaa+}. Then, using the above introduced notation we see that
\begin{eqnarray*}
I_1&\leq &2\int _t ^T |\varphi_\epsilon(s,Y_s^n)|^{1/2}|f(s,Y_s^n,Z_s^n, U^n_s)|\chi_\epsilon(s,Y_s^n)ds\notag\\
&&+2\int _t ^T |\varphi_\epsilon(s,Y_s^n)|^{1/2}|f(s,Y_s^n,Z_s^n, U_s^n)|(1-\chi_\epsilon(s,Y_s^n))ds.
\end{eqnarray*}
Furthermore, by the inequality $ab \leq \eta a^2 + \frac{b^2}{4\eta}$ and $x \leq \max\{1,x^2\}$,
\begin{eqnarray*}
(i)&& 2|\varphi_\epsilon(s,Y_s^n)|^{1/2}|f(s,Y_s^n,Z_s^n, U^n_s)|\chi_\epsilon(s,Y_s^n)\notag\\
&\leq&
\frac{n}{4} \varphi_\epsilon(s,Y_s^n) \chi_\epsilon(s,Y_s^n) + \frac{4}{n} |f(s,Y_s^n,Z_s^n, U^n_s)|^2 \chi_\epsilon(s,Y_s^n),\notag\\
(ii)&& 2|\varphi_\epsilon(s,Y_s^n)|^{1/2}|f(s,Y_s^n,Z_s^n, U^n_s)|(1-\chi_\epsilon(s,Y_s^n))\notag\\
&\leq&2({\e}+ {\e} |f(s,Y_s^n,Z_s^n,U^n_s)|^2)(1-\chi_\epsilon(s,Y_s^n)).
\end{eqnarray*}

Next, focusing on the term $I_3$, we have by the bilinearity of $\langle \cdot, \cdot\rangle$ that
 \begin{eqnarray}\label{aaa++}
I_3&=&-2n \int _t ^T|Y_s^n-\pi_\epsilon(s,Y_s^n)|^2\chi_\epsilon(s,Y_s^n) ds\notag\\
&&- 2n \int _t ^T\langle Y_s^n-\pi_\epsilon(s,Y_s^n),\pi_\epsilon(s,Y_s^n)-\pi(s,Y_s^n)\rangle\chi_\epsilon(s,Y_s^n) ds\notag\\
&&- 2n \int _t ^T\langle Y_s^n-\pi_\epsilon(s,Y_s^n),Y_s^n-\pi(s,Y_s^n)\rangle(1-\chi_\epsilon(s,Y_s^n)) ds\notag\\
&:=&I_{31}+I_{32}+I_{33}.
\end{eqnarray}
By Lemma \ref{lemmauu1} $(i)$ we immediately see that $|I_{33}|\leq c n\epsilon^2$. Furthermore, using Lemma \ref{lemmauu2} $(ii)$  we see that
\begin{eqnarray}\label{aaa+++}
|I_{32}|\leq c\sqrt{\epsilon} n \int _t ^T(d(Y_s^n,D_{\epsilon,s}))^{3/2}\chi_\epsilon(s,Y_s^n) ds.
\end{eqnarray}
Using the inequality \( ab \leq \frac{3a^{\frac{4}{3}}}{4}+\frac{b^4}{4} \) with
 $a=  d(Y^n_s, D_{\e,s})^{\frac{3}{2}}$ and $b=c\sqrt{\epsilon}$ we deduce from
 \eqref{aaa+++} that
    \begin{eqnarray}\label{aaa+++jj}
|I_{32}|\leq cn{\epsilon^2}+n \int _t ^T(d(Y_s^n,D_{\epsilon,s}))^{2}\chi_\epsilon(s,Y_s^n) ds.
\end{eqnarray}
Putting the estimate \eqref{aaa+++jj} into \eqref{aaa++} together we can conclude that
\begin{eqnarray*}
I_3\leq cn\e^2-n  \int _t ^T|Y_s^n-\pi_\epsilon(s,Y_s^n)|^2\chi_\epsilon(s,Y_s^n) ds.
\end{eqnarray*}
Hence, putting the estimate for $I_1$ and $I_3$ together we can conclude that
\begin{eqnarray}\label{aaad}
I_1+I_3&\leq & c\epsilon+cn\epsilon^2-\frac 3 4n \int _t ^T\varphi_\epsilon(s,Y_s^n)\chi_\epsilon(s,Y_s^n) ds\notag\\
&&+c\int _t ^T\biggl (\frac 1 n+\epsilon\biggr ) |f(s,Y_s^n,Z_s^n, U^n_s)|^2 ds.
\end{eqnarray}
Combining \eqref{aaa} and \eqref{aaad} we have proved that
\begin{eqnarray} \label{aaad+}
\varphi_\e(t,Y_t^n)&\leq& c( \e+n\e^2) -\frac{3}{4} n \int _t ^T \varphi_\e(s,Y^n_s) \chi_\e(s,Y^n_s) ds\notag\\
 &&+ c\int _t^T(\frac{1}{n} + \e) |f(s,Y^n_s, Z^n_s, U^n_s)|^2 ds \notag \\
&&- 2\int _t ^T \langle Y_s^n-\pi_\epsilon(s,Y_s^n),Z^n_s
dW_s \rangle\notag\\
&&-2\int_t ^T \int _U \langle Y_s^n-\pi_\epsilon(s,Y_s^n), U^n_s(e) \rangle  \mu(de,ds).
\end{eqnarray}
In particular,
\begin{eqnarray} \label{aaad+ia}
&&\varphi_\e(t,Y_t^n)+\frac{3}{4}n \int _t ^T \varphi_\e(s,Y^n_s) \chi_\e(s,Y^n_s) ds\notag\\
 &\leq& c( \e+n\e^2)+ c\int _t^T(\frac{1}{n} + \e) |f(s,Y^n_s, Z^n_s, U^n_s)|^2 ds \notag \\
&&-2\int _t ^T \langle Y_s^n-\pi_\epsilon(s,Y_s^n),Z^n_s
dW_s \rangle-2\int_t ^T \int _U \langle Y_s^n-\pi_\epsilon(s,Y_s^n), U^n_s(e) \rangle  \mu(de,ds).
\end{eqnarray}
where $c$ is independent of $\e$ and $n$. The estimate in Lemma \ref{lemmbb} $(ii)$ now follows from taking expectation in \eqref{aaad+ia} and using the Lipschitz property of $f$ and
Lemma \ref{ll1} $(i)$ and $(ii)$. Similarly, using  \eqref{aaad+} we can conclude that
\begin{eqnarray*}
\sup_{0\leq t\leq T}E[\varphi_\e(t,Y^n_t)] \leq c\bigl (\frac{1}{n} +\e + n\e^2\bigr),
\end{eqnarray*}
for a constant $c$, independent of  $\e$ and $n$. Since $\varphi_\e(t, Y_t) \geq 0$ for all $t \in [0,T]$ we can, repeating the arguments above, also conclude from \eqref{aaa-li} that 
\begin{eqnarray}\label{eq:allests}
&& E\left [ \int_t^T (\partial_{s}\varphi_\epsilon)(s,Y_{s}^n)ds \right ]+ E \left [\int_t^T  \sum_{i,j} (Z_s^{n} Z_s^{n,\ast})_{ij} \partial^2_{y_iy_j}\varphi_\epsilon(s, Y_{s}) ds \right ]  \notag\\
+&&E\left [ \int _t ^T \int _U [\varphi_{\e}(s,Y_{s^-}^n+U^n_s(e)) - \varphi_{\e}(Y^n_{s^-}) - \langle \nabla \varphi_{\e}(s,Y^n_{s^-}) , U^n_s(e) \rangle ] p(de,ds) \right ] \notag  \\
&& \leq c\bigl (\frac{1}{n} +\e + n\e^2\bigr),
\end{eqnarray}
for some constant $1\leq c < \infty$, for all $t \in [0,T]$. 
Once again using the Lipschitz property of $f$ and Lemma \ref{ll1} in \eqref{aaad+ia} we see that to complete the proof of Lemma \ref{lemmbb} $(i)$ it remains to control the terms
\begin{eqnarray}\label{eq:finests}
&(i)&E \left [\sup_{0\leq t \leq T} \left | \int _t ^T \langle Y_s^n-\pi_\epsilon(s,Y_s^n),Z^n_s dW_s \rangle  \right | \right ] 
\notag \\
&(ii)&E \left [ \sup_{0\leq t \leq T}\left |\int_t ^T \int _U \langle Y_s^n-\pi_\epsilon(s,Y_s^n), U^n_s(e) \rangle \mu(de,ds) \right | \right ].
\end{eqnarray}

We first treat \eqref{eq:finests} $(i)$. As in \eqref{jaj++nyhha} we use the Burkholder-Davis-Gundy inequality to see that
\begin{align}\label{eq:nuzest}
&E \left [\sup_{0\leq t \leq T} \left | \int _t ^T \langle Y_s^n-\pi_\epsilon(s,Y_s^n),Z^n_s dW_s \rangle \right | \right ] &\notag \\
  \leq & E \left [ \left ( \int _0 ^T \left |(Y_s^n-\pi_\epsilon(s,Y_s^n))^\ast Z^n_s \right|^2 ds \right )^{1/2} \right ].
\end{align}
By the convexity of $\varphi_\e$ and equivalence of Euclidean norms (see \cite{GP}~p.~115) we have that
$$
\frac {|(Y_s^n-\pi_\e(s,Y_s^n))^\ast Z^n_s|^2}{|(Y_s^n-\pi_\e(s,Y_s^n))|^2} \I_{\{Y^n_s \not \in D_{e,s}\}}\leq c \left ( \sum_{i,j} (Z_s^{n} Z_s^{n,\ast})_{ij} \partial^2_{y_iy_j}\varphi_\epsilon(s, Y_{s}) \right),
$$
where $\I$ is the indicator function, i.e., $\I_{\{Y^n_s \not \in D_{\e,s}\}} = 1$ if $Y^n_s \not \in D_{\e,s}$ and $0$ otherwise. Hence, it follows from \eqref{eq:allests} that for $t\in [0,T]$
\begin{equation}\label{eq:nuzest1+}
\int_t ^T \frac{|(Y_s^n-\pi_\e(s,Y_s^n))^\ast Z^n_s|^2}{|(Y_s^n-\pi_\e(s,Y_s^n))|^2}  \I_{\{Y^n_s \not \in D_{e,s}\}}ds\leq  c\bigl (\frac{1}{n} +\e + n\e^2\bigr).
\end{equation}
Note also that $\left | Y_s^n-\pi_\epsilon(s,Y_s^n) \right |= \left |Y_s^n-\pi_\epsilon(s,Y_s^n)  \right | \I_{\{Y^n_s \not \in D_{\e,s}\}}$ and that
\begin{align}\label{eq:nuzest2}  \int _t ^T |(Y_s^n-\pi_\e(s,Y_s^n))^\ast Z^n_s|^2 ds & \notag \\
 \leq \sup_{t\leq s \leq T}& \varphi_\e(s, Y^n_s)  \int _t ^T \frac {| (Y_s^n-\pi_\e(s,Y_s^n))^\ast Z^n_s|^2}{|(Y_s^n-\pi_\e(s,Y_s^n))|^2} \I_{\{Y^n_s \not \in D_{e,s}\}} ds.
\end{align}
We again use $ab \leq \eta a^2 + \frac{b^2}{\eta}$ to conclude from \eqref{eq:nuzest}, \eqref{eq:nuzest1+} and \eqref{eq:nuzest2} that 
\begin{align}\label{eq:wrty}
&E \left [\sup_{0\leq t \leq T} \left | \int _t ^T \langle Y_s^n-\pi_\epsilon(s,Y_s^n),Z^n_s dW_s \rangle \right | \right ] \notag \\
\leq &\eta E \left [\sup_{0 \leq t \leq T} \varphi_\e(t, Y^n_t) \right ] +c_\eta(\frac{1}{n} +\e + n\e^2\bigr)
\end{align}
for some constant $c_\eta <\infty$ depending on the degree of freedom $\eta >0$.

We now treat the term \eqref{eq:finests} $(ii)$. Using Taylor's theorem we see that 
\begin{align*}E\left [ \int _t ^T \int _U [\varphi_{\e}(s,Y_{s^-}^n+U^n_s(e)) - \varphi_{\e}(Y^n_{s^-}) - \langle \nabla \varphi_{\e}(s,Y^n_{s^-}) , U^n_s(e) \rangle p(de,ds) \right ]& \notag \\
\geq \breve c E \left [ \int_t^T |U(e)|^2  \lambda(de) ds \right ],
\end{align*}
for some constant $\breve c\geq 0$. Furthermore, by the strong convexity of $\varphi_\e$, and this is a consequence of \eqref{timedep+}, there exists a constant $\kappa >0$ such that $\breve c\geq \kappa >0$. Therefore we can, in a way similar to the above, conclude that
\begin{align}\label{eq:wrty2}
&E \left [ \sup_{0\leq t \leq T}\left |\int_t ^T \int _U \langle Y_s^n-\pi_\epsilon(s,Y_s^n), U^n_s(e) \rangle \mu(de,ds) \right | \right ] \notag \\
\leq & E \left [\left (\int_t ^T \int _U |Y_s^n-\pi_\epsilon(s,Y_s^n)|^2 |U^n_s(e)|^2 \lambda(de) ds  \right) ^{1/2} \right ] \notag  \\
\leq & E \left [ \left ( \sup_{t\leq s \leq T}\varphi_\e(s,Y^n_s) \int_t ^T \int _U  |U^n_s(e)|^2 \lambda(de) ds  \right) ^{1/2}\right ] \notag  \\
\leq &\eta E \left [\sup_{0 \leq t \leq T} \varphi_\e(t, Y^n_t) \right ] +c_\eta(\frac{1}{n} +\e + n\e^2\bigr).
\end{align}
Once again, $c_\eta <\infty$  is a constant depending on the degree of freedom $\eta>0$. 

The proof of Lemma \ref{lemmbb} is now completed by choosing $\eta$ small enough (in analogy with \eqref{jaj++1+},\eqref{jaj++ha}) and combining \eqref{aaad+ia} with \eqref{eq:wrty}, \eqref{eq:wrty2}.
\end{proof}
\begin{lemma}\label{lemmbbaa} Let $D\subset \R^{d+1}$ be a time-dependent domain satisfying \eqref{timedep+}~-~\eqref{timedep+2}. Then there exists  $c$, $1\leq c<\infty$, independent of $n$ such that
\begin{eqnarray*}
(i)&&E \left [\sup_{0\leq t\leq T}(d(Y_t^n,D_{t}))^2 \right] \leq \frac{ c} n,\notag\\
(ii)&&E \left [\int_0^T(d(Y_t^n,D_{t}))^2dt \right] \leq \frac {c} {n^2},
\end{eqnarray*}
whenever $n\geq 1$.
\end{lemma}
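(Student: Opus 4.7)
The plan is to deduce this lemma from Lemma \ref{lemmbb} by comparing the distance to the original domain $D_t$ with the distance to the smooth approximation $D_{\epsilon,t}$ and then optimizing the free parameter $\epsilon$.

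First, I would use the Hausdorff bound from Lemma \ref{lemmauu1-}, namely $h(D_t,D_{\epsilon,t})<\epsilon$ for all $t\in[0,T]$, to show that for every $y\in\mathbb{R}^d$ and $t\in[0,T]$,
\begin{equation*}
d(y,D_t)\leq d(y,D_{\epsilon,t})+\epsilon.
\end{equation*}
Indeed, given any point $z\in\overline{D_{\epsilon,t}}$ realizing (or approximating) $d(y,D_{\epsilon,t})$, there exists $w\in\overline{D_t}$ with $|z-w|<\epsilon$ by the Hausdorff bound, and the triangle inequality gives the claim. Squaring and applying $(a+b)^2\leq 2a^2+2b^2$ yields $d(y,D_t)^2\leq 2 d(y,D_{\epsilon,t})^2+2\epsilon^2$.

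Next, I would substitute $y=Y_t^n$ and invoke Lemma \ref{lemmbb}. This gives
\begin{equation*}
E\left[\sup_{0\leq t\leq T} d(Y_t^n,D_t)^2\right] \leq 2c\left(\tfrac{1}{n}+\epsilon+n\epsilon^2\right)+2\epsilon^2,
\end{equation*}
and a similar inequality with $c(1/n^2+\epsilon/n+\epsilon^2)$ for the integrated version. The final step is to choose $\epsilon=1/n$, which is admissible for all $n\geq 1/\epsilon_0$ (and the finitely many remaining $n$ are absorbed by enlarging the constant). With this choice each of the three terms $1/n$, $\epsilon$, $n\epsilon^2$ is of order $1/n$, and the extra $2\epsilon^2$ term is of order $1/n^2$; thus
\begin{equation*}
E\left[\sup_{0\leq t\leq T} d(Y_t^n,D_t)^2\right]\leq c/n,
\end{equation*}
proving $(i)$. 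Analogously, with $\epsilon=1/n$, each of $1/n^2$, $\epsilon/n$, $\epsilon^2$ is of order $1/n^2$, so the integrated estimate gives $(ii)$.

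The argument is entirely routine once Lemma \ref{lemmbb} is available; the only point that required real work was the calibration $\epsilon=1/n$, which balances the two sources of error (approximation of $D$ by $D_\epsilon$ versus penalization strength), and this is precisely why Lemma \ref{lemmbb} was stated with an explicit $\epsilon,n$ dependence rather than with a pre-chosen relation between them.
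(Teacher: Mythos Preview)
Your proposal is correct and follows the same structure as the paper's proof: use the Hausdorff bound $h(D_t,D_{\epsilon,t})<\epsilon$ from Lemma~\ref{lemmauu1-} to get $d(Y_t^n,D_t)\leq d(Y_t^n,D_{\epsilon,t})+\epsilon$, and then feed in Lemma~\ref{lemmbb}.

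The only difference is in how you dispose of $\epsilon$. You calibrate $\epsilon=1/n$ and speak of ``balancing the two sources of error''; the paper simply observes that the left-hand side does not depend on $\epsilon$ at all and sends $\epsilon\to 0$, which makes the terms $\epsilon$, $n\epsilon^2$, $\epsilon/n$, $\epsilon^2$ vanish and leaves exactly $c/n$ and $c/n^2$. This is slightly cleaner than your version, since it avoids the side issue of handling the finitely many $n$ with $1/n\geq\epsilon_0$, and it shows that no genuine balancing is needed here---the approximation error can be driven to zero independently of $n$.
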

\begin{proof}  Let, for $\epsilon>0$ small,  $D_{\epsilon}$
be as in Lemma \ref{lemmauu1-}. Then, using Lemma \ref{lemmauu1-} we have
\begin{eqnarray*}
h(D_{t},D_{\epsilon,t})<\epsilon\mbox{ for all $t\in[0,T]$.}
\end{eqnarray*}
Hence,
\begin{eqnarray*}
d(Y_t^n,D_{t})\leq d(Y_t^n,D_{\epsilon,t})+\epsilon\mbox{ for all $t\in[0,T]$.}
\end{eqnarray*}
Applying Lemma \ref{lemmbb} and letting $\e \to 0$ completes the proof.
\end{proof}

\subsection{$(Y_t^n, Z_t^n, U_t^n)$ is  a Cauchy sequence }

\begin{lemma}\label{anotherlemma} There exists a constant $c$ such that the following holds whenever $m, n \in \mathbb{Z}^+$:
\begin{eqnarray*}
E \left [\sup_{0 \leq t \leq T} |Y_t^n-Y_t^m|^2 + \int_0 ^t \|Z_t^n-Z_t^m\|^2  dt\right ] &\leq& c \left(\frac{1}{n} + \frac{1}{m}\right),\\
E\left [\int_0 ^T \int _U |U_s^n(e)-U^m_s(e)| ^2 \lambda(de) ds\right] &\leq& c \left(\frac{1}{n} + \frac{1}{m}\right).
\end{eqnarray*}
\end{lemma}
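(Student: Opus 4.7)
The plan is to apply Ito's formula to the semimartingale $|Y_t^n - Y_t^m|^2$ and exploit the monotonicity property of the projection (Lemma \ref{lemmaa} $(ii)$) together with the estimates on $d(Y^n,D)$ already established in Lemma \ref{lemmbbaa}. Writing down the dynamics of $Y^n - Y^m$ from \eqref{ffa3} and the equation preceding it, and then applying Ito's formula, I obtain for any $t\in[0,T]$
\begin{align*}
&|Y_t^n - Y_t^m|^2 + \int_t^T \|Z_s^n - Z_s^m\|^2 ds + \int_t^T \int_U |U_s^n(e) - U_s^m(e)|^2 p(de,ds) \\
&= 2\int_t^T \langle Y_s^n - Y_s^m, f(s,Y_s^n,Z_s^n,U_s^n) - f(s,Y_s^m,Z_s^m,U_s^m)\rangle ds \\
&\quad - 2n\int_t^T \langle Y_s^n - Y_s^m, Y_s^n - \pi(s,Y_s^n)\rangle ds + 2m\int_t^T \langle Y_s^n - Y_s^m, Y_s^m - \pi(s,Y_s^m)\rangle ds \\
&\quad - 2\int_t^T \langle Y_s^n - Y_s^m, (Z_s^n - Z_s^m) dW_s\rangle - 2\int_t^T \int_U \langle Y_s^n - Y_s^m, U_s^n(e) - U_s^m(e)\rangle \mu(de,ds).
\end{align*}

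The central point is to estimate the penalization cross terms. Setting $a_s := Y_s^n - \pi(s,Y_s^n)$ and $b_s := Y_s^m - \pi(s,Y_s^m)$, Lemma \ref{lemmaa} $(ii)$ applied to the pairs $(Y_s^n,Y_s^m)$ and $(Y_s^m,Y_s^n)$ yields
\begin{align*}
-2n\langle Y_s^n - Y_s^m, a_s\rangle + 2m\langle Y_s^n - Y_s^m, b_s\rangle &\leq 2n\langle b_s,a_s\rangle + 2m\langle a_s, b_s\rangle \\
&\leq 2(n+m)|a_s||b_s|.
\end{align*}
Combining Cauchy--Schwarz with Lemma \ref{lemmbbaa} $(ii)$, i.e.\ $E\int_0^T |a_s|^2 ds \leq c/n^2$ and analogously for $b_s$,
\[
 E\int_0^T 2(n+m)|a_s||b_s| ds \leq 2(n+m)\Bigl(\tfrac{c}{n^2}\Bigr)^{1/2}\Bigl(\tfrac{c}{m^2}\Bigr)^{1/2} \leq c\Bigl(\tfrac{1}{n}+\tfrac{1}{m}\Bigr),
\]
which is exactly the target rate and is the key mechanism of the proof.

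The remaining terms are handled along the lines of Lemma \ref{ll1}. The Lipschitz assumption \eqref{data++} $(iii)$ combined with the elementary inequality $ab \leq \eta a^2 + b^2/(4\eta)$ gives
\[
2\langle Y_s^n - Y_s^m, f(s,Y_s^n,Z_s^n,U_s^n) - f(s,Y_s^m,Z_s^m,U_s^m)\rangle \leq C_\eta |Y_s^n-Y_s^m|^2 + \eta(\|Z_s^n-Z_s^m\|^2 + \|U_s^n-U_s^m\|_2^2),
\]
and choosing $\eta$ small enough allows the $Z$ and $U$ quadratic terms to be absorbed into the left-hand side (noting that $E\int p(de,ds) = E\int \lambda(de)ds$ for the compensated expectation). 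Taking expectations kills the martingale terms and yields
\[
 E|Y_t^n-Y_t^m|^2 + \tfrac{1}{2} E\int_t^T \!\!\|Z_s^n - Z_s^m\|^2 ds + \tfrac{1}{2} E\int_t^T\!\!\int_U |U_s^n-U_s^m|^2 \lambda(de) ds \leq c\bigl(\tfrac{1}{n}+\tfrac{1}{m}\bigr) + C \int_t^T E|Y_s^n - Y_s^m|^2 ds,
\]
so Gronwall's inequality produces the pointwise-in-$t$ bound $E|Y_t^n-Y_t^m|^2 \leq c(\tfrac{1}{n}+\tfrac{1}{m})$ and hence the second stated inequality as well. Finally, to upgrade the pointwise bound into the supremum bound, I would proceed exactly as in the passage from \eqref{jaj++ha} to \eqref{ffa2--++}: apply Burkholder--Davis--Gundy to the two stochastic integrals, bound the resulting terms by $\eta E[\sup_{0\le t\le T}|Y_t^n-Y_t^m|^2] + c_\eta E\int_0^T(\|Z^n-Z^m\|^2 + \|U^n-U^m\|_2^2)ds$, and absorb the $\eta$-term. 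The only subtlety is the bookkeeping for the penalization cross terms when the $\sup$ enters, but since they have already been controlled in $L^1(\Omega\times[0,T])$ by $c(1/n+1/m)$, no additional structural argument is required. The main obstacle, as anticipated, lies entirely in the cross term $-2n\langle Y^n-Y^m,a\rangle + 2m\langle Y^n-Y^m,b\rangle$; everything else is standard BSDE machinery once this has been tamed.
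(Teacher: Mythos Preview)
Your proposal is correct and follows essentially the same approach as the paper: Ito's formula on $|Y^n-Y^m|^2$, Lemma \ref{lemmaa} $(ii)$ for the penalization cross terms, Lemma \ref{lemmbbaa} $(ii)$ to extract the rate $c(1/n+1/m)$, then Gronwall and Burkholder--Davis--Gundy. The only cosmetic difference is that you bound the combined cross term $2(n+m)|a_s||b_s|$ via Cauchy--Schwarz in $L^2(\Omega\times[0,T])$, whereas the paper treats the $n$- and $m$-terms separately using the weighted Young inequality with $\beta=m/n$; both yield the same estimate.
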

\begin{proof}Applying Ito's formula to $|Y^n_t-Y^m_t|^2$ we get that
\begin{eqnarray}\label{eq:itoexpmn}
&&|Y^n_t-Y^m_t|^2+\int _t ^T \|Z_s^n-Z_s^m\|^2 ds\notag\\
&& + \int _t ^T \int _U |U^n_s(e)- U^m_s(e)| ^2 p(de,ds) \notag\\
&=& 2 \int _t ^T \langle Y^n_s - Y^m_s , f(s,Y^n_s, Z^n_s, U^n_s) - f(s,Y^m_s, Z^m_s, U^m_s) \rangle ds \notag \\
&&-  2 \int _t ^T \langle Y^n_s - Y^m_s , (Z^n_s- Z^m_s) dW_s\rangle\notag\\
 &&- 2 \int _t ^T \int _U \langle Y^n_s - Y^m_s , U^n_s(e)- U^m_s(e) \rangle \mu(de,ds)  \notag \\
&&-  2n \int _t ^T \langle Y^n_s - Y^m_s , Y^n_s- \pi(s,Y^n_s) \rangle ds\notag\\
&& + 2m \int _t ^T \langle Y^n_s - Y^m_s , Y^m_s- \pi(s,Y^m_s) \rangle ds.
\end{eqnarray}
Hence, taking expectation and using the Lipschitz character of $f$ we deduce that
\begin{eqnarray}\label{apa1}
&&E \left [|Y^n_t-Y^m_t|^2 \right]+E\left [\int _t ^T \|Z_s^n-Z_s^m\|^2 ds\right ]\notag\\
&& + E\left [\int _t ^T \int _U |U^n_s(e)- U^m_s(e)| ^2 \lambda(de)ds\right] \notag\\
&\leq& c E\left [ \int _t ^T (|Y^n_s - Y^m _s |^2 + |Y^n_s - Y^m _s |\|Z^n_s - Z^m _s \|)ds\right ]\notag \\
&&+c E\left [ \int _t ^T |Y^n_s - Y^m _s | \int _U |U^n_s(e) - U^m _s(e)| \lambda(de) ds\right ] \notag \\
&&- 2E\left [ \int _t  ^T \langle Y^n_s - Y^m_s , n(Y^n_s - \pi(s,Y^n_s)) - m(Y^m_s - \pi(s,Y^m_s)) \rangle ds\right ].
\end{eqnarray}
Note that
\begin{eqnarray*}
&&-\langle Y^n_s - Y^m_s , n(Y^n_s - \pi(s,Y^n_s)) - m(Y^m_s - \pi(s,Y^m_s)) \rangle \notag\\
&=&\langle Y^m_s - Y^n_s , n(Y^n_s - \pi(s,Y^n_s)) \rangle +\langle Y^n_s - Y^m_s ,  m(Y^m_s - \pi(s,Y^m_s)) \rangle.
\end{eqnarray*}
Using Lemma \ref{lemmaa} $(ii)$ we have that
\begin{eqnarray*}
&&\langle Y^m_s - Y^n_s , n(Y^n_s - \pi(s,Y^n_s)) \rangle\leq n \langle Y^m_s - \pi(s,Y^m_s) , Y^n_s - \pi(s,Y^n_s)\rangle \notag\\
&&\langle Y^n_s - Y^m_s ,  m(Y^m_s - \pi(s,Y^m_s)) \rangle\leq m \langle Y^n_s - \pi(s,Y^n_s) , Y^m_s - \pi(s,Y^m_s)\rangle.
\end{eqnarray*}
Furthermore,
\begin{eqnarray*}
&&2E\left [ \int _t  ^T \langle Y^m_s - Y^n_s , n(Y^n_s - \pi(s,Y^n_s)) \rangle ds\right ]\notag\\
&\leq& 2nE\left [ \int _t  ^T |Y^m_s - \pi(s,Y^m_s)|| Y^n_s - \pi(s,Y^n_s)|ds\right ]\notag\\
&\leq & nE\left [ \int _t  ^T \beta(d(Y^m_s,D_s))^2+\beta^{-1}(d(Y^n_s,D_s))^2ds\right ]\notag\\
&\leq &c(n\beta m^{-2}+\beta^{-1} n^{-1})\leq cm^{-1}
\end{eqnarray*}
where we have used Lemma \ref{lemmbbaa} $(ii)$ and chosen the degree of freedom to equal $\beta=m/n$. This argument can be repeated with
$n\langle Y^m_s - Y^n_s , n(Y^n_s - \pi(s,Y^n_s)) \rangle$ replaced by $m \langle Y^n_s - \pi(s,Y^n_s) , Y^m_s - \pi(s,Y^m_s)\rangle $ resulting in the bound $ cn^{-1}$. Put together we can conclude that
\begin{eqnarray}\label{apa}
&&- 2E\left [ \int _t  ^T \langle Y^n_s - Y^m_s , n(Y^n_s - \pi(s,Y^n_s)) - m(Y^m_s - \pi(s,Y^m_s)) \rangle ds \right ]\notag\\
&\leq &c(n^{-1}+m^{-1}).
\end{eqnarray}
Combining \eqref{apa1}, \eqref{apa} and using Cauchy's inequality as in \eqref{jaj++1+}, \eqref{jaj++ha} we can conclude that
\begin{eqnarray}\label{apa1+}
&&E \left [|Y^n_t-Y^m_t|^2 \right ]+ \frac{1}{2}E\left [\int _t ^T\|Z_s^n-Z_s^m\|^2 ds\right ]\notag\\
&& +\frac{1}{2} E\left [\int _t ^T \int _U |U^n_s(e)- U^m_s(e)| ^2 \lambda(de)ds\right ] \notag\\
&\leq& c E\left [ \int _t ^T |Y^n_s - Y^m _s |^2 ds\right ]+c(n^{-1}+m^{-1})
\end{eqnarray}
where $c$ is independent of $n$ and $m$. By Gronwall's inequality we then have, using \eqref{apa1+},
\begin{eqnarray}\label{apa1++}
E \left [|Y^n_t-Y^m_t|^2 \right ]\leq c(n^{-1}+m^{-1}).
\end{eqnarray}
Subsequently,
\begin{eqnarray}\label{apa1+a}
E\left [\int _t ^T\|Z_s^n-Z_s^m\|^2 ds\right ]&\leq& c(n^{-1}+m^{-1})\notag\\
E\left[\int _t ^T \int _U |U^n_s(e)- U^m_s(e)| ^2 \lambda(de)ds\right ]&\leq& c(n^{-1}+m^{-1}).
\end{eqnarray}
Using \eqref{apa1++}, \eqref{apa1+a}, Lemma \ref{lemmbbaa} $(i)$, and now familiar arguments based on the Burkholder-Davis-Gundy inequality, we can, starting from \eqref{eq:itoexpmn}, also deduce that
 \begin{equation*}
E  \left [ \sup_{0 \leq t \leq T} |Y^n_t -Y^m_t|^2 \right ] \leq c \left (\frac{1}{n}+\frac{1}{m}\right),
\end{equation*}
to complete the proof of Lemma \ref{anotherlemma}. We omit further details.
\end{proof}

\section{The final argument: proof of Theorem \ref{maint1}} \label{sec:proof}
\noindent
Using Lemma \ref{anotherlemma} we can conclude that $(Y_t^n,Z_t^n,U_t^n)$ is a Cauchy sequence in the space of progressively measurable processes $(Y_t,Z_t,U_t)$ satisfying
\begin{eqnarray*}
E \left [\sup_{0 \leq t\leq T} |Y_t|^2 \right] +E \left [\int _0 ^T \|Z_t\|^2 ds \right] +E \left[\int _0 ^T \int _U |U_t(e)|^2 \lambda(de) ds \right]<\infty.
\end{eqnarray*}
Hence, taking a subsequence if necessary, we have a sequence $(Y_t^n,Z_t^n,U_t^n)_{n \geq 0}$ and a triple of processes $(Y_t,Z_t,U_t)$ such that
\begin{equation*}
Y_t=\lim _{n \to \infty} Y^n_t, \
Z_t=\lim _{n \to \infty} Z^n_t, \
U_t=\lim _{n \to \infty} U^n_t
\end{equation*}
in the sense that
\begin{eqnarray}\label{aadg}
(i)&&E \left [\sup_{0 \leq t\leq T} |Y_t^n-Y_t|^2 \right] \to 0,\notag\\
(ii)&&E\left [\int _0 ^T \|Z_t^n-Z_t\|^2 ds \right] \to 0, \notag \\
(iii)&& E \left [\int _0 ^T \int _U |U_t^n(e)-U_t(e)|^2 \lambda(de) ds \right] \to 0
\end{eqnarray}
as $n \to \infty$. Furthermore, by Lemma \ref{ll1} we have
\begin{eqnarray}\label{ffaagain}
(i)&&E \left[\sup_{0 \leq t \leq T} |Y_t|^2 \right] <\infty,\notag\\
(ii)&& E \left [\int_0 ^T \|Z_t\|^2 dt +  \int _0^T \int _U|U_s(e)|^2 \lambda(de)ds \right ] < \infty.
\end{eqnarray}
Note that, as a uniform limit of c\`{a}dl\`{a}g functions $\{Y^n_t\}$, we immediately have that $Y_t\in \mathcal{D}\left( \left[ 0,T\right] ,\R^{d}\right)$ and, by Lemma \ref{lemmbbaa} $(i)$, we have $Y_t \in \overline D$. Recall that
\begin{equation*}
\Lambda_t^n= -n \int _0 ^t (Y^n_s- \pi(s,Y^n_s))ds=\int_0^t -\frac{( Y_s ^n-\pi(s,Y_s ^n))}{| Y_s ^n-\pi(s,Y_s ^n)|}d|\Lambda^n|_s
\end{equation*}
and that $-( Y_s ^n-\pi(s,Y_s ^n))/{| Y_s ^n-\pi(s,Y_s ^n)|}$ is an element in the inward directed normal cone to $D_s$ at
$\pi(s,Y_s ^n)\in\partial D_s$. Using \eqref{ffa} $(iii)$, \eqref{aadg} and \eqref{ffaagain} it follows that  there exists $\Lambda_t$ such that
\begin{equation*}
\Lambda_t=\lim _{n \to \infty}\Lambda^n= \lim _{n \to \infty} -n \int _0 ^t (Y^n_s- \pi(s,Y^n_s))ds
\end{equation*}
in the sense that
\begin{equation*}
E \left [\sup_{0 \leq t\leq T} |\Lambda_t^n-\Lambda_t|^2 \right ] \to 0.
\end{equation*}
Hence, as $(\Lambda^n_t(\omega))_{0\leq t \leq T}$ is continuous,  $(\Lambda_t(\omega))_{0\leq t \leq T}$  is continuous in $t$ a.s.

\subsection{Existence: $(Y_t, Z_t, U_t, \Lambda_t)$ is a solution} We will now prove that the constructed quadruple $(Y_t,Z_t,U_t,\Lambda_t)$ is a solution to our original problem. We first note that, as a limit of $(Y^n_t, Z^n_t, U^n_t)$, $(Y_t,Z_t,U_t)$ are progressively measurable, $Y_t\in \mathcal{D}\left( \left[ 0,T\right] ,\R^{d}\right)$ and $Z$ and $U$ are predictable. Hence it remains  to verify that $(Y_t, Z_t, U_t, \Lambda_t)$ satisfies $(i)$-$(vi)$ stated in Definition \ref{rbsde} and that $\Lambda\in \mathcal{BV}\left( \left[ 0,T\right] ,%
\mathbb{R}
^{d}\right) $. That
 $(Y_t, Z_t, U_t, \Lambda_t)$ satisfies $(i)$-$(iii)$ was proved above and $(iv)$ is a consequence of Lemma \ref{lemmbbaa} and
 \eqref{aadg} $(i)$. Hence we in the following focus on properties $(v)$ and $(vi)$. As mentioned above, we have that $(\Lambda_t(\omega))_{0\leq t \leq T}$  is continuous in $t$ for almost all
 $\omega$ by uniform convergence. Furthermore, using
that
$$\int _t ^T d|\Lambda^n|_s=n \int _t ^T |Y_s^n - \pi(s, Y^n _s)|ds$$ we see from Lemma \ref{ll1} $(iii)$
that
\begin{equation*}
E \left[\int _0 ^T d|\Lambda^n|_s \right ] = E \left [ n \int _0 ^T |Y^n_s-\pi(s,Y^n_s)| ds \right ]\leq c\mbox{ for all $n\in\mathbb Z_+$},
\end{equation*} 
\noindent for some constant $c$ which is independent of $n$. It follows that  $\Lambda_t(\omega)$ is
of bounded total variation on $[0,T]$ for almost all
 $\omega$. Hence, it only remains to verify that
\begin{eqnarray}\label{defo}
(v)&&\Lambda_t=\int_{0}^{t}\gamma _{s}d\left\vert \Lambda \right\vert
_{s},\ \gamma _{s}\in N_{s}^{1}\left( Y_{s}\right) \ \mbox{ whenever } Y_s \in \partial D_{s},\notag\\
(vi)&& d\left\vert \Lambda \right\vert \left( \left\{ t\in \left[ 0,T\right]
:\left(t,Y_{t}\right) \in D\right\} \right)=0.
\end{eqnarray}
To verify the statements in \eqref{defo} we will use the following lemma.
\begin{lemma} \label{lemma:adgda}
Let $\{\Lambda^n\}_{n \in \mathbb{Z}_+}$ be a sequence of continuous functions, $\Lambda^n:[0,T]\to \R^d$, which converges
uniformly to $\Lambda$ as $n\to \infty$. Assume  $\Lambda^n\in \mathcal{BV}\left( \left[ 0,T\right] ,%
\mathbb{R}
^{d}\right) $ and that $|\Lambda^n|_T \leq c$, for some $c<\infty$, hold for all $n$. Let
 $\{f^n\}_{n \in \mathbb{Z}_+}$ be a sequence of c\`{a}dl\`{a}g functions, $f^n:[0,T]\to \R^d$, converging uniformly to $f$ as $n\to \infty$. Then,
\begin{equation*}
\lim _{n \to \infty}\int _0 ^t \langle f^n_s, d\Lambda^n_s \rangle  = \int _0 ^t \langle f_s, d\Lambda_s \rangle
\end{equation*}
for all $t \in [0, T]$.
\end{lemma}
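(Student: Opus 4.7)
The plan is to reduce the convergence of Stieltjes integrals to (i) uniform convergence of $f^n$ weighted against the uniformly bounded total variations of $\Lambda^n$, and (ii) a weak-convergence argument for $d\Lambda^n$ tested against piecewise-constant approximations of $f$. First, I note that $\Lambda$ inherits bounded total variation from the sequence: by lower semicontinuity of the total variation under pointwise convergence, for any partition $0 = t_0 < \cdots < t_N = t$,
$$\sum_{i=1}^N |\Lambda_{t_i} - \Lambda_{t_{i-1}}| = \lim_{n \to \infty} \sum_{i=1}^N |\Lambda^n_{t_i} - \Lambda^n_{t_{i-1}}| \leq c,$$
so $|\Lambda|_T \leq c$ and the Stieltjes integral $\int_0^t \langle f_s, d\Lambda_s\rangle$ is well defined. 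Moreover, the uniform limit $\Lambda$ of continuous functions is continuous, which rules out boundary subtleties with jump contributions on the $d\Lambda$ side.

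Next I split
$$\int_0^t \langle f^n_s, d\Lambda^n_s\rangle - \int_0^t \langle f_s, d\Lambda_s\rangle = \int_0^t \langle f^n_s - f_s, d\Lambda^n_s\rangle + \int_0^t \langle f_s, d(\Lambda^n - \Lambda)_s\rangle.$$
The first term is bounded in absolute value by $\|f^n - f\|_\infty \cdot |\Lambda^n|_T \leq c\|f^n - f\|_\infty$, which vanishes as $n \to \infty$. For the second term I approximate $f$ uniformly on $[0,T]$ by a step function. Since $f$ is càdlàg (as a uniform limit of càdlàg functions), for every $\epsilon > 0$ the iteration $\tau_0 = 0$, $\tau_{k+1} = \inf\{s > \tau_k : |f_s - f_{\tau_k}| \geq \epsilon\} \wedge T$ terminates in finitely many steps, since a càdlàg function on a compact interval has only finitely many jumps of size exceeding $\epsilon$. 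This yields a partition $0 = s_0 < \cdots < s_M = T$ with $\sup_{s \in [s_i, s_{i+1})} |f_s - f_{s_i}| < \epsilon$. Setting $\tilde f^\epsilon_s := \sum_i f_{s_i}\mathbf{1}_{[s_i, s_{i+1})}(s)$, one has
$$\int_0^t \langle \tilde f^\epsilon_s, d\Lambda^n_s\rangle = \sum_i \langle f_{s_i}, \Lambda^n_{s_{i+1} \wedge t} - \Lambda^n_{s_i \wedge t}\rangle,$$
and analogously with $\Lambda$. Since this is a finite sum of endpoint values and $\Lambda^n \to \Lambda$ uniformly, the right-hand side converges to $\int_0^t \langle \tilde f^\epsilon_s, d\Lambda_s\rangle$ as $n \to \infty$.

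Finally, both $|\int_0^t \langle f_s - \tilde f^\epsilon_s, d\Lambda^n_s\rangle|$ and $|\int_0^t \langle f_s - \tilde f^\epsilon_s, d\Lambda_s\rangle|$ are bounded by $\epsilon c$ via the uniform total-variation bound, so the triangle inequality gives
$$\limsup_{n \to \infty} \left|\int_0^t \langle f^n_s, d\Lambda^n_s\rangle - \int_0^t \langle f_s, d\Lambda_s\rangle\right| \leq 2c\epsilon,$$
and letting $\epsilon \to 0$ concludes. The one nontrivial ingredient is the uniform approximation of the càdlàg integrand $f$ by step functions adapted to its jump structure; once this classical fact is available, the remainder is routine manipulation of total-variation bounds.
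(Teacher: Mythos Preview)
Your argument is correct. The paper does not actually prove this lemma: its entire proof reads ``This is essentially Lemma 5.8 in [GP], see also [S].'' You have supplied a self-contained argument, and it is the standard one: split off the $f^n-f$ contribution using the uniform total-variation bound, then handle $\int\langle f,d(\Lambda^n-\Lambda)\rangle$ by approximating the c\`adl\`ag integrand uniformly with step functions, for which the integral reduces to a finite sum of increments of $\Lambda^n$ and converges by uniform convergence. Your preliminary observations---that $\Lambda$ is continuous (uniform limit of continuous functions) and has $|\Lambda|_T\le c$ (lower semicontinuity of total variation)---are exactly what is needed to make the limiting integral well defined.

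One cosmetic remark: the reason the stopping-time iteration $\tau_{k+1}=\inf\{s>\tau_k:|f_s-f_{\tau_k}|\ge\epsilon\}$ terminates is not really the finiteness of large jumps but rather the existence of left limits: if $\tau_k\uparrow\tau<\infty$ then $f_{\tau_k}\to f_{\tau-}$, contradicting $|f_{\tau_{k+1}}-f_{\tau_k}|\ge\epsilon$ for all $k$ (which follows from right-continuity at $\tau_{k+1}$). This does not affect the validity of your proof.
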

\begin{proof} This is essentially Lemma 5.8 in \cite{GP}, see also \cite{S}.
\end{proof}

 Using Lemma \ref{lemmaa} $(i)$ we see that ,
 $$\langle z_t- Y^n _t, Y^n_t-\pi(t,Y^n_t) \rangle \leq 0$$
for any c\`{a}dl\`{a}g process $z_t$  taking values in $\overline{D_t}$. Hence, for any such process $z_t$ we have that 
  \begin{eqnarray*}
   \int_0 ^t  -n \langle z_s- Y^n _s, Y^n_s-\pi(s,Y^n_s) \rangle ds =\int_0^t \langle z_s-Y^n_s , d\Lambda^n_s \rangle \geq 0.
   \end{eqnarray*}
Passing to the limit we obtain, using Lemma \ref{lemma:adgda}, that
\begin{equation} \label{defo+}
\int_0 ^ t\langle Y_s- z_s, d\Lambda_s \rangle \leq 0
\end{equation}
for all $z \in \mathcal{D}\left( \left[ 0,T\right], \R^{d}\right)$ taking values in $\overline D$, and for all $t\in [0,T]$.
\noindent Next, let $\tau \in [0,T)$ be any time such that $Y_{\tau} \in D$ and let $\hat \gamma$ be a unit vector in $\R^d$. Since $Y_s$ is right-continuous, taking assumption \eqref{limitzero} into account, we see that there exists $\e >0$ and $\delta > 0$ such that $Y_s+\e \hat \gamma \in D$ and $Y_s-\e \hat \gamma \in D$ whenever $s \in [\tau, \tau+\delta]$.  However, this in combination with \eqref{defo+} implies that
\begin{equation*}
0 \leq \int _{\tau} ^{\tau+\delta} \hat \gamma d\Lambda_t \leq 0,
\end{equation*}
which in turn implies $(vi)$ in \eqref{defo}. Hence
 \begin{eqnarray*}
\Lambda_t=\int_{0}^{t}\gamma _{s}d\left\vert \Lambda \right\vert_{s},
\end{eqnarray*}
for some vector field $\gamma_s\in \R^d$, with support on $\partial D$, and such that $|\gamma_{s}|=1$ (see \eqref{bff}). To conclude the existence part of Theorem \ref{maint1} it remains to show \eqref{defo}~$(v)$, i.e., that $\gamma_s\in N_{s}^{1}\left( Y_{s}\right)$ whenever $Y_s \in \partial D_{s}$. However, using the above and \eqref{bff} we see that to prove $\gamma_s\in N_{s}^{1}\left( Y_{s}\right)$ whenever $Y_s \in \partial D_{s}$, it is enough to prove that if  $ Y_{s}\in\partial D_s$ and if
$\langle Y_{s}-z_s,\gamma_s\rangle\leq 0$
for all $z_s \in D_s$, then ${\gamma_s}\in N_{s}\left( Y_{s}\right)$. To do this, take $\beta=Y_s-\gamma_s \in  \mathbb R^d$. Then, 
\begin{equation*}
|\beta-z_s|^2=|\beta-Y_{s}|^2+|Y_{s}-z_s|^2 + 2\langle \beta -Y_{s},Y_{s}-z_s\rangle
\end{equation*}
for all $z_s\in D_s$. Hence, if $\langle \beta -Y_{s},Y_{s}-z_s\rangle = -\langle Y_{s}-z_s,\gamma_s\rangle\geq 0$, then we have that
\begin{equation*}
|\beta-z_s|^2\geq|\beta-Y_{s}|^2
\end{equation*}
for all $z_s \in D_s$, which implies $\gamma_s \in N_s(Y_s)$. This proves \eqref{defo} $(v)$ and thus the proof of the existence part of Theorem \ref{maint1} is complete. \hfill $\Box$

\subsection{Uniqueness: $(Y_t, Z_t, U_t, \Lambda_t)$ is the only solution}
We here prove the uniqueness part of Theorem \ref{maint1} using Ito's formula. Indeed, assume that $(Y^i,Z^i, U^i , \Lambda^i)$, $i=1,2$, are two solutions to the
 the reflected BSDE under consideration and define
\begin{equation*}
\{\Delta Y_t, \Delta Z_t, \Delta U_t, \Delta \Lambda_t\}= \{Y^1_t - Y^2_t,  Z^1_t - Z^2_t, U^1_t - U^2_t, \Lambda^1_t - \Lambda^2_t \}.
\end{equation*}
Then, applying Ito's formula to $|\Delta Y_t|^2$ and taking expectation we have that
\begin{align*}
& E \left [ |\Delta Y_t|^2+\int _t ^T |\Delta Z_s|^2 ds + \int _t ^T  \int _ U |\Delta U_s(e)|^2 \lambda(de)ds \right ] \\
=& 2 E  \left [ \int _ t ^T \langle \Delta Y_s, f(s,Y_s^1, Z_s^1, U_s^1)-f(s, Y_s^2,Z_s^2,U_s^2) \rangle ds \right  ]\\
&+ 2 E \left [\int _t ^T \langle \Delta Y_s, d\Delta\Lambda_s \rangle  \right].
\end{align*}
Using \eqref{defo+} we see that the last term on the right hand side in the above display is $\leq0$. Next, using the Lipschitz character of $f$ and standard manipulations, see \eqref{jaj++1}, \eqref{jaj++1+}, we can conclude that
\begin{align*}
& E \left [ |\Delta Y_t|^2+\int _t ^T |\Delta Z_s|^2 ds + \int _t ^T  \int _ U |\Delta U_s(e)|^2 \lambda(de)ds \right ] \\
\leq& c E\left [ \int _ t ^T |\Delta Y_s|^2 ds + \frac{1}{2}  \int _t ^T |Z_s| ^2 ds +  \frac{1}{2} \int _t ^T |\Delta U_s(e)|^2 \lambda(de)ds \right ].
\end{align*}
Applying Gronwall's lemma we see that $\{\Delta Y_t, \Delta Z_t, \Delta U_t\}$ must be identically zero a.s. By $(iii)$ of Definition 1, the same applies to $\Delta \Lambda_t$ and the proof is hence complete. \hfill $\Box$

\end{document}